\newcommand{\minus}{\smallsetminus}
\newcommand{\N}{{\mathbb{N}}}
\newcommand{\Z}{{\mathbb{Z}}}
\newcommand {\ignore}[1]  {}
\renewcommand{\S}{S_{\infty}}
\newcommand{\sym}[1]{\text{Sym}(#1)}
\renewcommand{\subset}{\subseteq}
\renewcommand{\phi}{\varphi}
\newcommand{\tuple}[2]{({#1}_1,\dots,{#1}_{#2})}
\newcommand{\gen}[1]{\langle{#1}\rangle}
\newcommand{\F}{\mathcal{F}}
\newcommand{\tr}[2]{\mathrm{tr}_{#2}(#1)}
\newtheorem{thm}{Theorem}[section]
\theoremstyle{definition}
\newtheorem{definition}[thm]{Definition}
\theoremstyle{plain}
\newtheorem{lem}[thm]{Lemma}
\newtheorem{prop}[thm]{Proposition}
\newtheorem*{main}{Main Theorem}
\theoremstyle{remark}
\newtheorem*{notation}{Notation}
\begin{document}
\title{Highly Transitive Actions of Surface Groups}
\author{Daniel Kitroser}
\maketitle

\begin{abstract}
A group action is said to be highly-transitive if it is $k$-transitive for every $k\geq 1$. The main result of this thesis is the following:
\begin{main}
The fundamental group of a closed, orientable surface of genus $> 1$ admits a faithfull, highly-transitive action on a countably infinite set.
\end{main}
From a topological point of view, finding a faithfull, highly-transitive action of a surface group is equivalent to finding an embedding of the surface group into $\sym{\Z}$ with a dense image. In this topological setting, we use methods originally developed in \cite{BGSS06} and \cite{BG09} for densely embedding surface groups in locally compact groups. 
\end{abstract}

\section{Introduction}

A permutation group $G\leq \text{Sym}(X)$ is called \emph{$k$-transitive} if it is transitive on ordered $k$-tuples of distinct elements and \emph{highly-transitive} if it is $k$-transitive for every $k \in \N$. When $G$ is given as an automorphism group, it can often be verified that the given action is highly transitive. This is the case for any subgroup of $\text{Sym}(X)$ which contains all finitely supported permutations, where $X$ is any infinite set. Other examples include groups such as $\operatorname{Homeo}(S^2)$ and many of its subgroups. But for most groups the question: ``what is the maximal $k$ for which the group admits a faithfull, $k$-transitive action''? is wide open. 

It was shown by McDonough \cite{M77} that any non-abelian free group admits a highly transitive action. Another proof which is more useful from our point of view was later given by Dixon in \cite{D90}, using a Baire Category type argument. 

In this paper, we prove that a \emph{surface group} (the fundamental group of a closed, orientable surface without boundry of genus greater than $1$) admits a faithfull, highly-transitive action of countably infinite degree. 

\subsection{Topological interpretation}

Before we continue, we introduce some notation.
\begin{notation}
We denote by $\S=\sym{\Z}$ the full symmetric group of a countable set which we identify with the integer numbers. The action of $G\leq \S$ on $\Z$ is always from the right and the image of an element $a\in\Z$ under $g\in G$ is denoted by $a^g$. If $n\in\N$ then $[n]$ will denote the set of all integers between $-n$ and $n$.
\end{notation}

We define a group topology on $\S$ where a basis is given by the sets of the form 
\[
U(\phi,[n])=\left\{\psi\in\S\ \left|\ \psi\big|_{[n]} =  \phi\big|_{[n]}\right. \right\},\ (\phi\in\S,n\in\N).
\]
It is easy to see that if $\phi\in\S$ and $A\subset\Z$ is any finite set then $U(\phi,A)$ is open in this topology. Notice that this topology is the    restriction to $\S$ of the topology of pointwise convergence on the space of all functions $\Z\to \Z$ and a straightforward proof shows that $\S$ is a topological group which is completely metrizable.

The problem of finding highly-transitive subgroups of $\S$ can now be approached via the following proposition, derived directly from the definition of the topology on $\S$.
\begin{prop}\label{HT=dense}
Let $G\leq\S$. Then $G$ is highly-transitive if and only if $G$ is dense in $\S$. 
\end{prop}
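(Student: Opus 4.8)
The plan is to prove both implications directly from the definitions, translating the combinatorial statement of $k$-transitivity into the topological statement of density through the basic open sets $U(\phi,[n])$. The single recurring tool is the elementary fact that any bijection between two finite subsets of $\Z$ extends to an element of $\S$: if $A=\{a_1,\dots,a_k\}$ and $B=\{b_1,\dots,b_k\}$ are finite and $a_i\mapsto b_i$ is a bijection $A\to B$, then since $\Z\minus A$ and $\Z\minus B$ are both countably infinite, any bijection between the complements glues with the given map to produce $\phi\in\S$ with $a_i^\phi=b_i$ for every $i$. This extension step is the only non-formal ingredient, and it is the one I would flag as the ``obstacle,'' though it is entirely routine.

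For the implication density $\Rightarrow$ highly-transitive, I would fix $k\in\N$ and two $k$-tuples of distinct elements $\tuple{a}{k}$ and $\tuple{b}{k}$. Using the extension fact, I would choose $\phi\in\S$ with $a_i^\phi=b_i$ for each $i$, and set $A=\{a_1,\dots,a_k\}$. Since $A$ is finite, $U(\phi,A)$ is open (as already noted in the excerpt), and density of $G$ yields some $g\in G\cap U(\phi,A)$. By the definition of $U(\phi,A)$ we have $g\big|_A=\phi\big|_A$, hence $a_i^g=b_i$ for all $i$, which establishes $k$-transitivity. As $k$ was arbitrary, $G$ is highly-transitive.

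For the converse, highly-transitive $\Rightarrow$ density, I would show that every basic open set meets $G$. Fixing $\phi\in\S$ and $n\in\N$, I would enumerate $[n]=\{a_1,\dots,a_m\}$ with $m=2n+1$. The tuples $(a_1,\dots,a_m)$ and $(a_1^\phi,\dots,a_m^\phi)$ both consist of distinct elements, the latter because $\phi$ is injective, so $m$-transitivity supplies $g\in G$ with $a_i^g=a_i^\phi$ for all $i$, that is, $g\big|_{[n]}=\phi\big|_{[n]}$, so $g\in G\cap U(\phi,[n])$. Since the sets $U(\phi,[n])$ form a basis of the topology, $G$ is dense. I expect both directions to be short: the proof is essentially a dictionary between the two languages, with the finite-bijection-extension lemma as its only substantive entry.
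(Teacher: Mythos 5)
Your proof is correct and is exactly the routine argument the paper has in mind: the paper omits the proof entirely, remarking only that the proposition is ``derived directly from the definition of the topology on $\S$,'' and your two-way translation via the basic sets $U(\phi,A)$, together with the finite-bijection-extension step, is precisely that derivation. Nothing is missing.
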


\subsection{The main result}

In view of proposition \ref{HT=dense}, the main result can be reformulated as follows. 
\begin{thm}[main theorem]\label{main}
Let $\Gamma = \pi_1(\Sigma_g)$ be the fundamental group of an orientable surface of genus $\geq 2$. Then there exists a dense subgroup of $\S$ which is isomorphic to $\Gamma$. 
\end{thm}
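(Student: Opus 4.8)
The plan is to prove the theorem by a Baire category argument in the spirit of Dixon, carried out not on $\S^{n}$ but on the space of surface-group representations. Fix the standard presentation $\Gamma=\gen{a_1,b_1,\dots,a_g,b_g\mid \prod_{i=1}^{g}[a_i,b_i]=1}$, so that a homomorphism $\Gamma\to\S$ is the same datum as a point of
\[
R=\Bigl\{(\alpha_1,\beta_1,\dots,\alpha_g,\beta_g)\in\S^{2g}\ \Bigm|\ \textstyle\prod_{i=1}^{g}[\alpha_i,\beta_i]=1\Bigr\}.
\]
Since the map $\S^{2g}\to\S$ given by the relator word is continuous, $R$ is closed in $\S^{2g}$; being a closed subspace of a completely metrizable space it is itself completely metrizable, hence a Baire space, and it is nonempty (for instance any representation that kills $b_1,\dots,b_g$ and factors through the free quotient $\Gamma\twoheadrightarrow F_g$ lies in $R$). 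I would run the entire argument inside $R$, writing a generic point as $\bar\alpha$ and its effect on a group element $\gamma$ as the permutation $\bar\alpha(\gamma)$.

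Next I would isolate two countable families of open subsets of $R$. By Proposition \ref{HT=dense}, $\bar\alpha$ has dense (equivalently highly-transitive) image exactly when every finite partial injection of $\Z$ is realized by some group element; so for each such partial injection $\sigma$ set
\[
D_\sigma=\bigl\{\bar\alpha\in R\ \bigm|\ \exists\,\gamma\in\Gamma:\ \bar\alpha(\gamma)\ \text{extends}\ \sigma\bigr\},
\]
which is open in $R$ because the restriction of a fixed word $\bar\alpha(\gamma)$ to the finite domain of $\sigma$ depends only on the generators restricted to a finite set; there are only countably many $\sigma$. Likewise, for each $1\neq w\in\Gamma$ put $F_w=\{\bar\alpha\in R\mid \bar\alpha(w)\neq 1\}$, again open, and $\Gamma$ is countable. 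A point of $\bigcap_\sigma D_\sigma\cap\bigcap_w F_w$ is precisely a faithful representation with dense image, i.e.\ a dense copy of $\Gamma$ in $\S$. Hence, by the Baire property of $R$, the theorem reduces to proving that every $D_\sigma$ and every $F_w$ is dense in $R$.

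Density of these sets reduces in turn to a single extension lemma: given a basic open set of $\S^{2g}$ that meets $R$ — that is, generators prescribed on a window $[n]$ in a way that extends to some point of $R$ — together with one target requirement (a partial injection $\sigma$ to be realized by some word, or a nontrivial $w$ to be made to move a point), produce an element of $R$ inside that basic set meeting the requirement. The ``free'' half of this is easy and is exactly where genus enters: I would keep $a_1,b_1,\dots,a_{g-1},b_{g-1}$ and $a_g$ essentially unconstrained and route the finitely many target points through fresh integers, just as in Dixon's argument for free groups. For $g\geq 2$ this already provides a nonabelian wealth of generators with which to realize $\sigma$ or to displace $w$. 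The genuine difficulty, and the main obstacle, is to repair the relation: having altered the free generators I must choose $\beta_g$ so that $\prod_{i=1}^{g}[\alpha_i,\beta_i]=1$ holds on all of $\Z$, equivalently so that $\alpha_g$ is carried by $\beta_g$ to $\alpha_g c$, where $c=\bigl(\prod_{i<g}[\alpha_i,\beta_i]\bigr)^{-1}$. Such a $\beta_g$ exists, and can be taken to extend the data already prescribed on $[n]$, precisely when $\alpha_g$ and $\alpha_g c$ are conjugate in $\S$, i.e.\ carry the same number of cycles of each finite and infinite length.

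The technical heart of the proof is therefore to run the routing so that this cycle-type matching can always be arranged without disturbing the window $[n]$ or the target values. I expect to do this by the method imported from \cite{BGSS06} and \cite{BG09}: extend the generators over fresh integers so that both $\alpha_g$ and $\alpha_g c$ consist of infinitely many infinite cycles and nothing else — a cycle type that determines a single conjugacy class of $\S$ — so that an admissible conjugator $\beta_g$ always exists off the controlled region and may be chosen compatibly with the finitely many constraints already imposed. The hypothesis $g\geq 2$ is essential here: it is what liberates the commutator $[\alpha_g,\beta_g]$ to absorb the product of the remaining commutators while still leaving a nonabelian free family to generate a dense, faithful image; for the torus $g=1$ the group is abelian and no such slack exists. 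Verifying that this cycle-type bookkeeping survives every stage of the countable construction is the step I expect to be hardest.
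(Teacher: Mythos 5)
Your reduction of the theorem to ``every $D_\sigma$ and every $F_w$ is dense in $R$'' is a reduction to a false statement, and this breaks the whole scheme. Consider the trivial representation $\bar\alpha^0\in R$ (all $\alpha_i=\beta_i=1$) and a partial injection $\sigma$ that moves some integer, say $a\mapsto b$ with $a\neq b$. The basic open subset of $R$ consisting of representations whose generators agree with the identity on a window $[n]\supset\{a,b\}$ is nonempty, but every point in it has all generators fixing $a$; since a permutation fixing $a$ has inverse fixing $a$ and products of such permutations fix $a$, \emph{every} element of the image fixes $a$, so no nearby representation can realize $\sigma$. Hence $D_\sigma$ misses an entire basic open set of $R$ and is not dense, and $\bigcap_\sigma D_\sigma$ is not residual in $R$ --- Baire gives you nothing. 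This is exactly why neither Dixon's argument nor the paper works in the full space: Dixon works inside $\overline{U}$, the closure of the set of tuples generating dense subgroups, and the paper's genericity lemma (Lemma \ref{residual}) adjoins the \emph{fixed} shift $\sigma$ as an extra generator, which moves every point and is what makes the transitivity sets $\mathcal{D}_{\mathbf{x},\mathbf{y}}$ dense (the witness there is the conjugate $\sigma^r\tau_1\sigma^{-r}$, which escapes the window by translation). If you tried to salvage your argument by replacing $R$ with the closure of the set of representations with dense image, you would first have to show that set is nonempty --- which is essentially the theorem itself, so the argument becomes circular.

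Beyond this, your ``repair'' step is also on thin ice: once the window data already closes up a finite cycle of $\alpha_g$ (e.g.\ the prescription contains $1\mapsto 2,\ 2\mapsto 1$), you cannot force $\alpha_g$ and $\alpha_g c$ into the single conjugacy class of permutations with only infinite cycles, and the conjugator $\beta_g$ is itself pinned on $[n]$, so the cycle-matching bookkeeping has genuine failure modes that you have not ruled out. The paper avoids both problems by decoupling the two difficulties: it first proves (Theorem \ref{free-dense}) that one can find a dense free group $F=\gen{\tau_1,\dots,\tau_r}\leq\S$, with one generator equal to the shift, such that prescribed words (the relator-halves $\gamma=[\phi_1,\phi'_1]\cdots[\phi_r,\phi'_r]$, and $\tau$ in the odd-genus case) generate non-discrete cyclic groups; it then parametrizes surface-group representations not by the variety $R$ but by the small deformation space $\Omega=\overline{\gen{\gamma}}$ (resp.\ $\overline{\gen{(\gamma,\tau)}}$), where $f_\omega$ conjugates half the generators by $\omega$, so the relation holds automatically by commutation and every image contains the dense group $F$. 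Faithfulness is then generic in $\Omega$ because the Dehn-twist sequence $f_{\gamma^n}$ is eventually faithful (Lemmas \ref{ef-even}, \ref{ef-odd}, imported from the locally compact setting of Breuillard--Gelander--Souto--Storm and Barlev--Gelander) and the powers $\{\gamma^n\ |\ n\geq n_0\}$ are dense in $\Omega$ by non-discreteness (Lemma \ref{lem1}, with Lemma \ref{nondisc2} handling the pair $(\gamma,\tau)$). Some mechanism of this kind --- producing density of image for free and obtaining faithfulness from an eventually faithful sequence dense in the parameter space --- is the missing idea your proposal needs.
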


The methods used in this work to obtain dense embeddings of surface groups into $\S$ are analogous to those used in \cite{BGSS06} and \cite{BG09} to show that if a locally compact group contains a dense, free subgroup of every finite rank $> 1$ then it contains a dense surface group of every genus $>1$. $\S$ is \emph{not} locally-compact but as we shall see, by prooving that $\S$ has a dense free subgroup of every finite rank $> 1$ with the additional property that certain elements of that free group generate a non-discrete cyclic group we can apply the same methods in our setting.

\section{Dense Embeddings of Free Groups}

As a first step to proving theorem \ref{main} we prove the following.

\begin{thm}\label{free-dense}
Let $r\geq 2$, let $w_i = w_i\tuple{\tau}{r},\ (i\in\N)$ be reduced words and assume that there exists $j\in\{1,\dots,r\}$ such that every $w_i$ is not a conjugate of a power of $\tau_j$. Then, there exist $\tau_1,\dots,\tau_r\in\S$ such that $F=\gen{\tau_1,\dots,\tau_r}$ is a dense, rank $r$ free subgroup of $\S$ and such that $\gen{w_i\tuple{\tau}{r}}$ is non-discrete for every $i\in\N$.
\end{thm}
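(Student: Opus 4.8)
The plan is to construct the permutations $\tau_1,\dots,\tau_r$ directly, as increasing unions of finite partial injections of $\Z$, meeting a countable list of requirements one at a time; this is a concrete, ``forcing'' version of Dixon's Baire category argument from \cite{D90}. The reason a purely generic (comeager) argument does not suffice is that non-discreteness of $\gen{w_i\tuple{\tau}{r}}$ is not a generic property: indeed $\gen{w_i\tuple{\tau}{r}}$ is non-discrete if and only if the permutation $w_i\tuple{\tau}{r}$ has no infinite orbit, since if some integer lay on an infinite orbit then, for a suitable $N$, no nonzero power of $w_i$ could fix $[N]$ pointwise. Having all orbits finite is a meager condition on $\S$, so orbit-finiteness must be engineered by hand, while density and freeness — which are comeager — are arranged by the standard extension moves.

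I would enumerate four families of requirements and process them in a single list. For each pair of $k$-tuples of distinct integers $(\bar a,\bar b)$, a density requirement asks that some element of $F$ carry $\bar a$ to $\bar b$; satisfying all of these makes $F$ highly-transitive, hence dense by Proposition \ref{HT=dense}. For each nontrivial reduced word $v$, a freeness requirement asks that $v\tuple{\tau}{r}$ move some integer; satisfying all of these makes the tautological map $F_r\to F$ injective, so $F$ is free of rank $r$. Totality requirements ensure each $\tau_k$ has domain and range all of $\Z$, so the limit maps are genuine permutations. Finally, for each $i\in\N$ and each $N\in\N$, a non-discreteness requirement asks that every integer in $[N]$ be driven into a finite, closed cycle of $w_i\tuple{\tau}{r}$; once this holds, the least common multiple $m$ of the lengths of the finitely many $w_i$-cycles meeting $[N]$ gives $w_i^m$ fixing $[N]$ pointwise with $m\neq 0$, witnessing non-discreteness of $\gen{w_i\tuple{\tau}{r}}$.

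The density, freeness and totality requirements are handled by routine finite extensions: one routes the relevant points through fresh integers, choosing images not yet used, so injectivity is preserved and each requirement, once met, stays met — a word that already moves a point keeps moving it, and a tuple already in the correct $F$-orbit stays there. Here the distinguished generator $\tau_j$ plays the role of an inexhaustible reservoir: I keep infinitely many of its values undetermined and use it as the engine for the transitivity and non-triviality moves, which is possible precisely because, as explained below, the finite-orbit constraints never force $\tau_j$ itself to have only finite orbits.

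The heart of the proof, and the step I expect to be the main obstacle, is the non-discreteness requirement: given the current finite partial injections, I must close up the $w_i$-orbit of each point of $[N]$ into a finite cycle while (i) preserving injectivity of every $\tau_k$, (ii) not trivializing any nontrivial reduced word, and (iii) keeping $w_i$ of infinite order so that freeness survives. Tracing the action of $w_i$ letter by letter and closing the orbit after finitely many passes requires a free coordinate at which to ``point the last letter back'' to the start of the orbit without creating a collision, and it must not collapse $w_i$ to a torsion element or to a power of a single generator. This is exactly where the hypothesis that every $w_i$ is not a conjugate of a power of $\tau_j$ is used: it guarantees that each $w_i$ genuinely mixes $\tau_j$ with the other generators, so the finite-orbit condition on $w_i$ can be met by coordinating $\tau_j$ against the remaining letters rather than by forcing $\tau_j$ to have finite orbits everywhere. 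Thus $\tau_j$ remains free to absorb the density and freeness moves, and the three families of constraints can be satisfied simultaneously. Verifying at each stage that a finite extension meeting the current requirement and respecting all earlier commitments exists, and that (iii) is maintained by making the successive $w_i$-cycles of unbounded length, completes the construction; the resulting $\tau_1,\dots,\tau_r$ then have all the asserted properties.
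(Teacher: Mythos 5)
Two of the assertions on which you base your construction are false, and they are not cosmetic. First, your criterion ``$\gen{w_i\tuple{\tau}{r}}$ is non-discrete if and only if $w_i\tuple{\tau}{r}$ has no infinite orbit'' is wrong in the direction you need: if all orbits are finite but of \emph{bounded} length, then $w_i^m=1$ for $m$ the product of the orbit lengths, so $\gen{w_i}$ is finite, hence discrete. The correct criterion (Lemma \ref{nondisc} of the paper) is: all orbits finite \emph{and} of unbounded length. Your witness in the second paragraph inherits this gap: $w_i^m$ fixes $[N]$ pointwise with $m\neq 0$, but this witnesses non-discreteness only if $w_i^m\neq 1$, which requires the unboundedness you only retrofit in the last sentence. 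Second, and more consequentially, your stated reason for abandoning the Baire category argument --- ``having all orbits finite is a meager condition on $\S$'' --- is exactly backwards. That condition is residual: for each $a\in\Z$ the set of tuples for which the $w$-orbit of $a$ is finite is open (it is determined by finitely many values of the generators) and dense (trace the orbit of $a$ through the finitely determined data until the first undefined application of a letter and reroute it back), and proving precisely this is the content of part (3) of Lemma \ref{residual}, via the sets $\mathcal{U}_t$ and $\mathcal{V}_a$. The property that genuinely fails to be generic in the full product $\S^r$ is \emph{density} of $\gen{\tau_1,\dots,\tau_r}$: if each $\phi_i$ happens to permute $[m]$ setwise, then every tuple in the corresponding basic neighbourhood preserves $[m]$, so the transitivity sets $\mathcal{D}_{\mathbf{x},\mathbf{y}}$ are not dense in $\S^r$. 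The paper's remedy is to fix $\tau_j$ to be the shift $\sigma$ (which admits no finite invariant set) and run the category argument in the remaining $r-1$ coordinates, where all three conditions become residual simultaneously.

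This misdiagnosis propagates into what you call the heart of the proof. The hypothesis that no $w_i$ is conjugate to a power of $\tau_j$ does not play the ``reservoir'' role you assign it: in a forcing construction every generator is, at each finite stage, a finite partial injection, so none of them is ever in danger of being forced total, and your density and freeness moves are available regardless. The hypothesis is needed in the paper for a different and sharper reason: once $\tau_j=\sigma$ is fixed, every power of $\sigma$ has only infinite orbits, so for $w_i$ conjugate to a power of $\sigma$ the group $\gen{w_i}$ is discrete no matter how the other generators are chosen; for all other words, after cyclic reduction some letter of $w_i$ is a $\tau_k^{\pm 1}$, and that letter is where the orbit-closing redirection can take place. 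Meanwhile the one genuinely delicate step in your approach --- closing the $w_i$-orbit of a point by tracing forward to the first point $b$ with an undefined letter, backward to the first such point $c$, and routing $b$ to $c$ through fresh integers while respecting reducedness and injectivity (the paper's argument for density of $\mathcal{V}_a$, with its two auxiliary points $d_1,d_2$) --- is only gestured at, and it is attributed to a hypothesis that does not supply it. The forcing strategy itself could likely be carried out (it is an explicit version of the paper's Baire argument), but as written the proposal rests on a false non-discreteness criterion and a false meagerness claim, and leaves the key extension step unverified, so it does not constitute a proof.
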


We prove the theorem using Baire's Category Theorem. 

\begin{definition}
Let $X$ be a topological space. A subset of $X$ which is a countable intersection of dense, open sets is called \emph{residual} or \emph{generic}.
\end{definition}

Baire's theorem states that in a non-trivial, complete metric space, a residual set is dense. Since they are dense and closed under countable intersections, residual sets in a complete metric space can be thought of as being ``large''. Dixon showed in \cite{D90} that if  we denote 
\[U = \{\tuple{\tau}{r}\in\S^r\ |\ \gen{\tau_1,\dots,\tau_r}\text{ is dense in $\S$}\}\]
then the set of elements in $U$ that freely generate a free group is residual in $\overline{U}$. We use a somewhat different setting than Dixon in order to prove the existence of the desired dense free subgroups. Fix $\sigma\in\S$ to be the \emph{shift} permutation, i.e. $\forall a\in\Z : a^{\sigma} = a+1$. We show that for every $n\geq 1$, the set of elements $\tuple{\tau}{n}\in\S^n$ such that $\gen{\sigma,\tau_1,\dots,\tau_n}$ has the properties stated in theorem \ref{free-dense} is residual is $\S^n$.

\begin{lem}\label{nondisc}
Let $\gamma\in\S$. $\gen{\gamma}\leq\S$ is non-discrete if and only if the orbits of $\gen{\gamma}$ are all finite and of unbounded length.
\end{lem}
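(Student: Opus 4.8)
The plan is to prove the biconditional in Lemma~\ref{nondisc} by characterizing non-discreteness of $\gen{\gamma}$ in terms of the topology on $\S$. Recall that $\gen{\gamma}$ is discrete precisely when the identity is isolated in $\gen{\gamma}$, i.e. when there exists $n\in\N$ such that the only power $\gamma^k$ agreeing with the identity on $[n]$ is $\gamma^0 = \mathrm{id}$. Equivalently, $\gen{\gamma}$ is \emph{non}-discrete if and only if for every $n$ there is a nonzero integer $k$ with $\gamma^k\big|_{[n]} = \mathrm{id}\big|_{[n]}$; that is, $\gamma^k$ fixes every point of $[n]$. My strategy is to translate this fixed-point condition into a statement about the cycle structure (orbits) of $\gamma$, and then prove each direction of the stated equivalence.

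First I would record the elementary fact that $\gamma^k$ fixes a point $a\in\Z$ if and only if the $\gen{\gamma}$-orbit of $a$ is finite, of some length $\ell(a)$, with $\ell(a)\mid k$. Consequently $\gamma^k$ fixes all of $[n]$ exactly when $\ell(a)\mid k$ for every $a\in[n]$, which is possible for some nonzero $k$ if and only if all these orbit lengths are finite --- in which case one may take $k$ to be their least common multiple. I would then prove the two implications. For the ``if'' direction, assume all orbits of $\gen{\gamma}$ are finite and of unbounded length. Given any $n$, let $k_n$ be the lcm of the (finitely many, all finite) orbit lengths of the points in $[n]$; then $\gamma^{k_n}$ fixes $[n]$ pointwise, so $\gamma^{k_n}\in U(\mathrm{id},[n])$, and $k_n\neq 0$. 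The unboundedness of orbit lengths guarantees that we cannot take a single $k$ working for all $n$: if some fixed nonzero $k$ satisfied $\gamma^k=\mathrm{id}$, every orbit would have length dividing $k$ and hence be bounded, contradicting unboundedness. Thus the identity is a non-isolated point of $\gen{\gamma}$, so $\gen{\gamma}$ is non-discrete.

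For the ``only if'' direction, I would argue by contraposition: suppose the orbit hypothesis fails, so either some orbit is infinite, or all orbits are finite but of bounded length. If some orbit of $a$ is infinite, then no nonzero power $\gamma^k$ fixes $a$, so $U(\mathrm{id},\{a\})$ meets $\gen{\gamma}$ only in the identity, forcing discreteness. If instead all orbits are finite with lengths bounded by some $L$, let $k_0 = \mathrm{lcm}\{1,\dots,L\}$; then $\gamma^{k_0}$ fixes every point of $\Z$, so $\gamma^{k_0}=\mathrm{id}$ and $\gen{\gamma}$ is a finite cyclic group, hence discrete. In either case $\gen{\gamma}$ is discrete, completing the contrapositive. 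The only mild subtlety --- and the step I would handle most carefully --- is the bounded-orbit case, where one must note that finitely many finite orbit lengths bounded by $L$ all divide $\mathrm{lcm}\{1,\dots,L\}$, so a single global power of $\gamma$ is the identity; the rest of the argument is a direct unwinding of the definition of the topology and presents no real obstacle.
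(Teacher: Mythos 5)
Your proof is correct and takes essentially the same route as the paper's: an infinite orbit yields a point stabilizer $U(\mathrm{id},\{a\})$ isolating the identity, bounded orbit lengths force $\gamma^{k}=\mathrm{id}$ for some nonzero $k$ (so $\gen{\gamma}$ is finite and discrete), and in the unbounded-finite case a common multiple of the orbit lengths covering $[n]$ (you use an lcm where the paper uses a product) gives a nontrivial power of $\gamma$ in every basic identity neighbourhood. One small caveat: your opening ``equivalently'' --- non-discrete iff for every $n$ some \emph{nonzero} $k$ has $\gamma^{k}\big|_{[n]}=\mathrm{id}\big|_{[n]}$ --- is false as literally stated when $\gamma$ has finite order (one needs $\gamma^{k}\neq\mathrm{id}$, not merely $k\neq 0$), but your actual argument never uses the faulty direction and repairs the issue where it matters, by observing that unbounded orbit lengths rule out any nonzero power of $\gamma$ being the identity.
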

\begin{proof}
Assume first that $\gen{\gamma}$ has an infinite orbit $\Delta$ and let $a\in\Delta$. Then, $\gen{\gamma}\cap U(1,\{a\}) = \{1\}$ and so $\gen{\gamma}$ is discrete. Now assume that the length of the orbits of $\gen{\gamma}$ is bounded. Let $m$ be the product of the lengths of the orbits of $\gen{\gamma}$. So, $\gamma^m = 1$ thus, $\gen{\gamma}$ is finite and hence discrete.  

Conversely, suppose that the orbits of $\gen{\gamma}$ finite and of unbounded length. To prove that $\gen{\gamma}$ is non-discrete it is enough to show that every basic neighborhood of the identity contains some non-trivial power of $\gamma$. Let $n\in\N$ and let $\Delta = \cup_{i=1}^{k}\Delta_i$ be a finite union of orbits of $\gen{\gamma}$ such that $[n]\subset\Delta$. By hypothesis, all the $\Delta_i$'s are finite. If we set $m=\prod_{i=1}^{k}|\Delta_i|$ then for every $a\in\Delta$ (and in particular for every $a\in	[n]$) we have that $a^{\gamma^m}=a$ and since the orbit lengths of $\gen{\gamma}$ are unbounded, there is an orbit of $\gen{\gamma}$ which is longer then $m$ so $\gamma^m$ is not the identity element. Thus, $\gamma^m\neq 1$ is an element of the pointwise stablilizer $U(\text{id},[n])$. Since such stabilizers form a basis at the identity, we are finished.
\end{proof}

\begin{definition}
Let $\gamma_1,\dots,\gamma_n\in\S$ and let $w=w(\gamma_1,\dots,\gamma_n)$ be any word. If $w=w_1 w_2\cdots w_n$ where $w_i\in \{\gamma_1^{\pm 1},\dots,\gamma_n^{\pm 1} \}$ then the \emph{trace} of an element $a\in\Z$ under $w$ is the ordered set   
\[
	\tr{a}{w} = \{ a,a^{w_1},a^{w_1w_2},\dots,a^{w_1w_2\cdots w_n}=a^w\}. 
\]
\end{definition} 

\begin{lem}\label{residual}
	Fix $n\in\N$ and let $w = w(\sigma,\tau_1,\dots,\tau_n)$ be a reduced word which is not 
	conjugated to a power of $\sigma$. Then the following sets are residual:
	\begin{align*}
		&(1)& \F =\ & \{ (\tau_1,\dots,\tau_n)\in\S^n\ |\ 
		\gen{\sigma,\tau_1,\dots,\tau_n} \text{ is a free group of rank $n$ }\}	\\	
		&(2)&\mathcal{D} =\ & \{ (\tau_1,\dots,\tau_n)\in\S^{n}\ |\ 
		\gen{\sigma,\tau_1,\dots,\tau_n}\text{ is dense}\}\\ 
		&(3)&\mathcal{N} =\ & \{ (\tau_1,\dots,\tau_n)\in\S^{n}\ |\ 
		\gen{w(\sigma,\tau_1,\dots,\tau_n,)} \text{ is non-discrete}\} 
	\end{align*}
\end{lem}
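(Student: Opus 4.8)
The plan is to exploit that $\S$ is completely metrizable (as noted in the introduction), so $\S^n$ is a non-trivial complete metric space and Baire's theorem applies. For each of $\F$, $\mathcal{D}$, $\mathcal{N}$ I would exhibit the set as a \emph{countable} intersection of dense open subsets of $\S^n$, after which residuality is immediate from the definition. In all three cases openness is cheap, and density is the real content; I would prove density by a single type of extension argument. Starting from a tuple $(\tuple{\tau}{n})$ and a basic neighbourhood $\prod_i U(\tau_i,[m])$, I perturb the $\tau_i$ only on integers far outside $[m]$ so as to force the desired property, keeping each modified map a bijection by a back-and-forth bookkeeping of defined values. The fixed shift $\sigma$ is exactly what makes this possible: since $\sigma$ is transitive on $\Z$ with infinite order, any integer can be transported by a suitable power of $\sigma$ into a ``fresh'' region on which the values of the $\tau_i$ are still unconstrained, where I am free to define them.

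For $(1)$, the generators $\sigma,\tuple{\tau}{n}$ freely generate $G=\gen{\sigma,\tuple{\tau}{n}}$ iff no nontrivial reduced word $v=v(\sigma,\tuple{\tau}{n})$ represents the identity of $\S$, so $\F=\bigcap_{v\neq 1}A_v$ with $A_v=\{(\tuple{\tau}{n}) : v(\sigma,\tuple{\tau}{n})\neq 1\}$, a countable intersection. Each $A_v$ is open: $v\neq 1$ is witnessed by a single integer $a$ with $a^v\neq a$, and since $a^v$ depends on only finitely many values of the $\tau_i$ this inequality persists on a neighbourhood. For density I pick a fresh integer $a$ and use powers of $\sigma$ together with newly assigned values of the $\tau_i$ on untouched integers to realise a trace $\tr{a}{v}$ whose endpoint differs from $a$; when $v$ is a pure power of $\sigma$ this is automatic because $\sigma$ has infinite order.

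For $(2)$, the group $G$ is dense iff for every $k$ and every pair of $k$-tuples of distinct integers $\bar a,\bar b$ there is $g\in G$ with $a_i^{\,g}=b_i$ for all $i$; hence $\mathcal{D}=\bigcap_{\bar a,\bar b}D_{\bar a,\bar b}$ over the countably many such pairs, where $D_{\bar a,\bar b}$ collects the tuples admitting a realising $g$. Each $D_{\bar a,\bar b}$ is open, since the realising word acts identically on $\bar a$ throughout a small enough neighbourhood. Density is Dixon's step: given the constraints on $[m]$, I shuttle the $a_i$ by a large power of $\sigma$ into a fresh region, define one generator (say $\tau_1$) there so as to send them toward the targets $\bar b$, and shuttle back by another power of $\sigma$, using back-and-forth to keep $\tau_1$ injective and consistent with its prescribed values on $[m]$.

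For $(3)$ I use the neighbourhood description of non-discreteness coming from Lemma \ref{nondisc}: $\gen{w}$ is non-discrete iff for every $m$ there is $k\neq 0$ with $w^{k}\big|_{[m]}=\mathrm{id}$, equivalently iff every point of $[m]$ lies on a finite $w$-orbit. Thus $\mathcal{N}=\bigcap_m \mathcal{N}_m$ with $\mathcal{N}_m=\bigcup_{k\neq 0}\{(\tuple{\tau}{n}) : w^{k}\big|_{[m]}=\mathrm{id}\}$, and each locus $\{w^{k}\big|_{[m]}=\mathrm{id}\}$ is clopen (it is where finitely many continuous $\Z$-valued evaluation maps take prescribed values), so $\mathcal{N}_m$ is open. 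I expect the density of $\mathcal{N}_m$ to be the main obstacle, and it is here that the hypothesis that $w$ is not conjugate to a power of $\sigma$ becomes essential: a power of $\sigma$ has only infinite orbits, so $\gen{\sigma^{k}}$ is always discrete and the property could never be arranged. I plan to use the hypothesis to show that, after cyclic reduction, $w$ genuinely involves some $\tau_j$; this lets me treat the occurrences of $\tau_j^{\pm1}$ along the trace of a fresh point as free choices that I steer back to the starting integer, closing the $w$-orbit through each point of $[m]$ while preserving injectivity of the $\tau_i$ and their prescribed values. Taking $k$ to be a common multiple of these finite orbit lengths then gives $w^{k}\big|_{[m]}=\mathrm{id}$, as required.
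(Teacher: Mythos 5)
Parts (1) and (2) of your proposal are correct and essentially coincide with the paper's proof: the same decompositions $\F=\bigcap_{v\neq 1}\F_v$ and $\mathcal{D}=\bigcap_{k}\bigcap_{\mathbf{x},\mathbf{y}}\mathcal{D}_{\mathbf{x},\mathbf{y}}$, openness because word evaluations at a point depend on only finitely many values of the $\tau_i$, and density by assigning fresh values outside the constrained region, with the conjugation $\sigma^{r}\tau_1\sigma^{-r}$ used to shuttle the $x_j$ into unconstrained territory, exactly as in the paper.

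Part (3), however, contains a genuine gap: your characterization of non-discreteness is wrong. You claim $\gen{w}$ is non-discrete iff for every $m$ there is $k\neq 0$ with $w^{k}\big|_{[m]}=\mathrm{id}$, ``equivalently iff every point of $[m]$ lies on a finite $w$-orbit.'' Both formulations drop half of Lemma \ref{nondisc}: the orbits must be finite \emph{and of unbounded length}. Non-discreteness demands a \emph{nontrivial} element of $\gen{w}$ in each basic identity neighbourhood, i.e.\ a power with $w^{k}\neq 1$ \emph{as a permutation} and $w^{k}\big|_{[m]}=\mathrm{id}$; requiring only $k\neq 0$ as an exponent does not ensure this, since for some tuples $w(\sigma,\tau_1,\dots,\tau_n)$ evaluates to a finite-order permutation (possibly the identity), in which case $\gen{w}$ is finite, hence discrete, yet such tuples belong to every one of your $\mathcal{N}_m$. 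Concretely, $\bigcap_m\mathcal{N}_m$ is the set of tuples for which all $\gen{w}$-orbits are finite, which strictly contains $\mathcal{N}$, and proving residuality of this larger set says nothing about $\mathcal{N}$ (residuality does not pass to subsets). The repair is precisely the paper's extra family $\mathcal{U}_t=\{(\tau_1,\dots,\tau_n)\in\S^n\ :\ \gen{w(\sigma,\tau_1,\dots,\tau_n)}\text{ has an orbit of length }\geq t\}$: each $\mathcal{U}_t$ is open (finitely many constraints along the trace of a witnessing point) and dense (by the same fresh-point construction as in (1), arrange $b,b^{w},\dots,b^{w^{t-1}}$ distinct), and then $\mathcal{N}=\big(\bigcap_{t}\mathcal{U}_t\big)\cap\big(\bigcap_{a}\mathcal{V}_a\big)$, where $\mathcal{V}_a$ is your finite-orbit condition localized at a single point $a$. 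Your sketch for closing orbits (cyclically reduce $w$, use the hypothesis that $w$ is not conjugate to a power of $\sigma$ so that it genuinely involves some $\tau_j$, follow the orbit until it first requires an undefined value, then steer back with fresh integers) is the right idea and matches the paper's density argument for $\mathcal{V}_a$; note that the paper closes one orbit at a time rather than all orbits through $[m]$ simultaneously, which keeps the bookkeeping finite and simple, whereas your $\mathcal{N}_m$ formulation would force you to close several orbits at once.
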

\begin{proof}
\textbf{(1)} Let $v$ be a reduced, non-trivial word on $n+1$ letters and consider the set
\[
	\F_v = \{ (\tau_1,\dots,\tau_n)\in\S^n\ |\ v(\sigma,\tau_1,\dots,\tau_n)\neq 1 \}.
\]
If we prove that $\F_v$ is open and dense in $\S^n$ for every $v$ as above then $\F$ is residual since $\F =\bigcap_{v\neq 1} \F_v$. Obviously, $\F_v$ is open as the inverse image of the open set $\S\minus \{1\}$ under the continuous mapping $(\tau_1,\dots,\tau_n)\mapsto v(\sigma,\tau_1,\dots,\tau_n)$. To prove that $\F_v$ is dense let $(\phi_1,\dots,\phi_n)\in\S^n$ and let $m\in\N$. We prove that there exists $(\tau_1,\dots,\tau_n)\in\F_v$ such that $\tau_i\big|_{[m]}=\phi_i\big|_{[m]}$ for every $1\leq i\leq n$. First, write 
\[
	v(\sigma,\tau_1,\dots,\tau_n)=\sigma^{r_1}v_1\sigma^{r_2}v_2\cdots\sigma^{r_k}v_k\sigma^{r_{k+1}}
\]
where $r_i\in\Z$ and $v_i\in\{\tau_1^{\pm 1},\dots,\tau_n^{\pm 1}\}$ for every $i$. We want define the permutations $\tau_1,\dots,\tau_n$.
Choose $a_1,\dots,a_{k+1}\in\Z$ such that the numbers $a_1,\dots,a_{k+1},a_1+r_1,\dots,a_{k+1}+r_{k+1}$ are all distinct and define $(a_i+r_i)^{v_i}=a_{i+1}$ (e.g. if $v_1=\tau_5^{-1}$ define $(a_1+r_1)^{\tau_5^{-1}}=a_2$ or equivalently, define $a_2^{\tau_5}=a_1+r_1$). Note that since all the integers involved in the definition of the $v_i$'s are distinct and $v$ is reduced, the definitions of the $v_i$'s do not contradict each other. We now have that $a_1^v = a_{k+1}+r_{k+1}$ and in particular, $v$ is not trivial. In order to fulfill the condition that $\tau_i$ must send an element $b\in [m]$ to $b^{\phi_i}$ we choose $a_1,\dots,a_{k+1}$ in a way that will not contradict with this requirment. Explicitly, choose    
\[
	a_1,\dots,a_{k+1}\in \Z \minus \left( [m]\cup\bigg(\bigcup_i [m]^{\phi_i}\bigg) \cup\bigg(\bigcup_j 
	[m]-r_j\bigg)\cup\bigg(\bigcup_{i,j} [m]^{\phi_i}-r_j\bigg) \right).
\]
Note that we can always choose $a_1,\dots,a_{k+1}$ in the manner described since we are only excluding a finite set of integers that we can not choose from. Now, every $\tau_i$ is defined on $[m]$ and on some other elements $\{b_1,\dots,b_{\ell}\} \subset  
\{a_1,\dots,a_k,a_1+r_1,\dots,a_{k+1}+r_{k+1}\}$, sending them to $\{c_1,\dots,c_{\ell}\}$ respectively. Finally, choose any bijection
\[
	f_i: \Z\minus \big( [m]\cup\{b_1,\dots,b_{\ell}\}\big)\to\Z\minus\big([m]^{\phi_i}\cup\{c_1,\dots,c_{\ell}\}\big)
\]
and define
\[
	x^{\tau_i} =
	\begin{cases}
		x^{\phi_i}&,x\in[m].\\
		c_j&, x=b_j \text{ for some $j$}.\\
		x^{f_i}&,\text{else}.
	\end{cases}
\]
So we have that every $\tau_i$ is a permutation which lies in the basic neighbourhood of $\phi_i$ defined by $[m]$ such that $v(\sigma,\tau_1,\dots,\tau_n)$ is non-trivial.
\newline\newline
\textbf{(2)} Fix some $k\in\N$ and for every two $k$-tuples $\mathbf{x} = (x_1,\dots,x_k)$ and $\mathbf{y} = (y_1,\dots,y_k)$ of distinct integers consider the set
\[
	\mathcal{D}_{\mathbf{x},\mathbf{y}} = \left\{ (\tau_1,\dots,\tau_n)\in\S^n\ \left| \
	\begin{array}{l}
		\exists \phi\in\gen{\sigma,\tau_1,\dots,\tau_n}:\\
		\forall 1\leq j\leq k : x_j^{\phi} = y_j
	\end{array}\right. \right\}.
\] 
Notice that if $(\tau_1,\dots,\tau_n)\in\bigcap_{\mathbf{x},\mathbf{y}}\mathcal{D}_{\mathbf{x},\mathbf{y}}$ where $\mathbf{x}$ and $\mathbf{y}$ range over all $k$-tuples of distinct integers then $\gen{\sigma,\tau_1,\dots,\tau_n}$   
is $k$-transitive. Thus, by proposition \ref{HT=dense} we have that 
\[ \mathcal{D}=\bigcap_{k\in\N}\bigcap_{\mathbf{x},\mathbf{y}}\mathcal{D}_{\mathbf{x},\mathbf{y}}\] and we are left to show that $\mathcal{D}_{\mathbf{x},\mathbf{y}}$ is open and dense for every $\mathbf{x}$ and $\mathbf{y}$ as
above.

First, fix $\mathbf{x} = (x_1,\dots,x_k),\mathbf{y} = (y_1,\dots,y_k)$ as above and let $(\tau_1,\dots,\tau_n)\in\mathcal{D}_{\mathbf{x},\mathbf{y}}$. By definition there exists $\phi \in \gen{\sigma,\tau_1,\dots,\tau_n}$ such that $x_i^{\phi} = y_i$ for every $i=1,\dots,k$. Let us write $\phi = v(\sigma,\tau_1,\dots,\tau_n)$ where $v=v(\sigma,\tau_1,\dots,\tau_n)$ is a word on $\{\sigma^{\pm 1},\tau_1^{\pm 1},\dots,\tau_n^{\pm 1}\}$ and define $A=\bigcup_{i=1}^k \tr{x_i}{v}$. $A$ is finite and so the set
\[
	\mathcal{U} = \left\{ (\psi_1,\dots,\psi_n)\in\S^n\ \left| \
	\begin{array}{l}
		\psi_i\big|_A = \tau_i\big|_A,\ \psi_i^{-1}\big|_A = \tau_i^{-1}\big|_A\\
		i=1,\dots,n
	\end{array} \right. \right\}
\]
is an open neighbourhood of $(\tau_1,\dots,\tau_n)$ contained in $\mathcal{D}_{\mathbf{x},\mathbf{y}}$. Indeed, if $(\psi_1,\dots,\psi_n)\in\mathcal{U}$ take $\xi=v(\sigma,\psi_1,\dots,\psi_n)\in\gen{\sigma,\psi_1,\dots,\psi_n}$, then by the definition of $\mathcal{U}$, we have that $\xi$ acts the same as $\phi=v(\sigma,\tau_1,\dots,\tau_n)$ on $x_i$ and in particular, $\xi$ sends $x_i$ to $y_i$. This shows that $\mathcal{D}_{\mathbf{x},\mathbf{y}}$ is open.

Now we prove that $\mathcal{D}_{\mathbf{x},\mathbf{y}}$ is dense. Let $(\phi_1,\dots,\phi_n)\in\S^n$ and $m\in\N$. Let $r\in\N$ be such that $x_j+r\notin [m]$ and $y_j+r\notin [m]^{\phi_1}$ for every $1\leq j\leq k$. Let 
\[
	f:\Z\minus\big( [m]\cup\{x_1+r,\dots,x_k+r\}\big) \to \Z\minus\big( [m]^{\phi_1}\cup\{y_1+r,\dots,y_k+r\}\big)
\]
be any bijection and define
\[
	x^{\tau_1} = 
	\begin{cases}
		x^{\phi_1}&, x\in [m]\\
		y_j+r&, x=x_j+r \text{ for some $1\leq j\leq k$}\\
		x^f&,\text{otherwise}
	\end{cases}.
\]
Now define $\tau_i=\phi_i$ for every $2\leq i\leq n$ and we get that $\tuple{\tau}{n}$ is an element of the basic neighbourhood of $\tuple{\phi}{n}$ defined by $[m]$. Also, the permutation $\xi=\sigma^r \tau_1\sigma^{-r}\in\gen{\sigma,\tau_1,\dots,\tau_n}$ sends each $x_j$ to $y_j$ thus, $\tuple{\tau}{n}\in \mathcal{D}_{\mathbf{x},\mathbf{y}}$.   
\newline\newline
\textbf{(3)} By lemma \ref{nondisc} we can equivalently write
\[
	\mathcal{N}=\left\{ \tuple{\tau}{n}\in\S^n\ \left|\
	\begin{array}{l}
 		\text{The orbits of $\gen{w(\sigma,\tau_1,\dots,\tau_n)}$ are all finite}\\
		\text{and of unbounded length.}
	\end{array} \right. \right\}.  
\]
Thus, if we define for every $t\in\N$ and $a\in\Z$:
\begin{align*}
	\mathcal{U}_t &= \{ \tuple{\tau}{n}\in\S^n\ |\ \gen{w(\sigma,\tau_1,\dots,\tau_n)} \text{ has an orbit of 
	length $\geq t$}\}.\\
	\mathcal{V}_a &= \{ \tuple{\tau}{n}\in\S^n\ |\ \text{The orbit of $a$ under 
	$\gen{w(\sigma,\tau_1,\dots,\tau_n)}$ 
	is finite} \}.
\end{align*}
we get that $\mathcal{N}=\big(\bigcap_{t\in\N}\mathcal{U}_t\big) \cap \big( \bigcap_{a\in\Z} \mathcal{V}_a\big)$.

We now show that $\mathcal{U}_t$ and $\mathcal{V}_a$ are open and dense for every $t\in\N$ and $a\in\Z$.
Let $\tuple{\tau}{n}\in\mathcal{U}_t$ and let $b\in\Z$ be an element belonging to an orbit of  $\gen{w(\sigma,\tau_1,\dots,\tau_n)}$ of length $\geq t$. Thus, $b,b^w,b^{w^2},\dots,b^{w^{t-1}}$ are all distinct. 
Let $\Delta = \bigcup_{i=1}^{k-1} \tr{b}{w^{i}}$. The set
\[
	\left\{ \tuple{\psi}{n}\in\S^n\ \left| 
	\begin{array}{l}
		\psi_i\big|_{\Delta} = \tau_i\big|_{\Delta},\ \psi_i^{-1}\big|_{\Delta} = \tau_i^{-1}\big|_{\Delta}\\
		i=1,\dots,n
	\end{array} \right. \right\}
\]
is an open neighbourhood of $\tuple{\tau}{n}$ which is contained in $\mathcal{U}_t$ hence, $\mathcal{U}_t$ is open.

Now, take $\tuple{\tau}{n}\in\mathcal{V}_a$ and let $\Delta = \{a_1,\dots,a_s\}$ be the finite orbit of $\gen{w(\sigma,\tau_1,\dots,\tau_n)}$ containing $a$. Similarly
\[
	\left\{ \tuple{\psi}{n}\in\S^n\ \left| 
	\begin{array}{l}
		\psi_i\big|_{\Delta} = \tau_i\big|_{\Delta},\ \psi_i^{-1}\big|_{\Delta} = \tau_i^{-1}\big|_{\Delta}\\
		i=1,\dots,n
	\end{array} \right. \right\}
\]
is an open neighbourhood of $\tuple{\tau}{n}$ which is contained in $\mathcal{V}_a$.

To prove $\mathcal{U}_t$ is dense, let $\tuple{\phi}{n}\in\S^n$ and $m\in\Z$. In (1) we in fact showed that we can define $\tau_1,\dots,\tau_n\in\S$ such that for every finite set $A\subset\Z$ we have that $\tau_i\big|_A$ acts in any way we please and there exists some $b\in\Z$ such that $b^w\neq b$ where $w=w(\sigma,\tau_1,\dots,\tau_n)$. By repeating the same argument we can find $\tau_1,\dots,\tau_n\in\S$ such that $\tau_i\big|_{[m]} = \phi_i\big|_{[m]}$ for every $1\leq i\leq n$ and such that there exists $b\in\Z$ such that $b,b^w,b^{w^2},\dots,b^{w^{t-1}}$ are all distinct i.e. $\gen{w(\sigma,\tau_1,\dots,\tau_n)}$ has an orbit of length $\geq t$.

Finally, we prove that $\mathcal{V}_a$ is dense. Since $\gen{w(\sigma,\tau_1,\dots,\tau_n)}$ has the same orbit structure as the cyclic group generated by any conjugate of $w$, we can assume without loss of generality that $w$ is a cyclically reduced word that is not a power of $\sigma$. Let $\tuple{\phi}{n}\in\S^n$ and $m\in\Z$. We need to define permutations $\tau_1,\dots,\tau_n\in\S$ such that $\tuple{\tau}{n}\in\mathcal{V}_a$ and every $\tau_i$ agrees with $\phi_i$ on $[m]$. This condition can be thought of in the following way: $\tau_i$ is already defined on $[m]$ for every $i$ and $\tau_i^{-1}$ is already defined on $[m]^{\phi_i}$ for every $i$ (they act the same as $\phi_i$ and $\phi_i^{-1}$ respectively) and we are left to define $\tau_i$ on $\Z\minus [m]$ (and $\tau_i^{-1}$ on $\Z\minus [m]^{\phi_i}$) in such a way that the orbit of $a$ under  $\gen{w(\sigma,\tau_1,\dots,\tau_n)}$ will be finite. First we write $w(\sigma,\tau_1,\dots,\tau_n)=w_1w_2\cdots w_k$. Now, we start by applying the positive and negative powers of $w$ to $a$ letter by letter:
\[
	\xrightarrow{w_\ell}c\xrightarrow{w_{\ell+1}} \dots \xrightarrow{w_{n-1}} a_{-1} \xrightarrow{w_n}
	a=a_0 \xrightarrow{w_1} a_1 \xrightarrow{w_2} \dots \xrightarrow{w_{s-1}} b \xrightarrow{w_s}
\]
where $b\in\Z$ is the first element such that we need to apply to it the permutation $w_s$ and $w_s$ is not yet defined on $b$, that is, $w_s=\tau_i$ for some $i$ and $b\notin [m]$ or $w_s=\tau_i^{-1}$ for some $i$ and $b\notin [m]^{\phi_i}$. Note that if such an element $b$ does not exist then, since by hypothesis at least one of the letters $w_1,\dots,w_k$ is not $\sigma$, the orbit of $a$ under $\gen{w(\sigma,\phi_1,\dots,\phi_n)}$ is contained in $[m]\cup\big(\bigcup_{i=1}^n [m]^{\phi_i}\big)$, hence finite and we can just take $\tau_i=\phi_i$ for every $i$. We can thus assume that such a $b$ exists. Similarly, $c\in\Z$ is the first element we reach when applying letter by letter the negative powers of $w$ to $a$ such that we need to apply to $c$ the permutation $w_{\ell}^{-1}$ and $w_{\ell}^{-1}$ is not yet defined on $c$. As before, we can assume without loss of generality that such an element $c$ exists. By hypothesis, $w$ is cyclically reduced and so the word $w_sw_{s+1}\cdots w_kw_1\cdots w_{\ell}$ is reduced. Also, by their definition, $w_s,w_\ell\neq\sigma^{\pm 1}$. We wish to define $\tau_1,\dots,\tau_n$ in such a way that $b^{w_sw_{s+1}\cdots w_kw_1\cdots w_{\ell}}=c$ (and of course fulfilling the condition that $\tau_i$ agrees with $\phi_i$ on $[m]$). By repeating the argument made in (1), we can find two distinct elements $d_1,d_2\in\Z$ that do not lie in $[m]$ or any $[m]^{\phi_i}$ such that $w_{s+1} \cdots w_k w_1 \cdots w_{\ell-1}$ sends $d_1$ to $d_2$ and define $b^{w_s}=d_1,\ d_2^{w_\ell} = c$. From this we get that $b^{w_s \cdots w_k w_1 \cdots w_l} = d_1^{w_{s+1} \cdots w_k w_1 \cdots w_\ell} = d_2^{w_\ell} = c$ thus, the orbit of $a$ is finite.  Now, every $\tau_i$ is defined on $[m]$ and maybe on finitely many more elements. Again, exactly as we did in (1), we can extend the definition of $\tau_i$ to $\Z$ and get permutations $\tau_1,\dots,\tau_n$ satisfying the required conditions.        
\end{proof}   

\begin{proof}[proof of theorem \ref{free-dense}]
Let $j\in\{1,\dots,r\}$ be such that every word $w_i$ is not a conjugate of a power of $\tau_j$. If we set $\tau_j = \sigma$ then by lemma \ref{residual}, the set
\[
W_i = \left\{ (\tau_1,\dots,\tau_{j-1},\tau_{j+1},\dots,\tau_r)\in\S^{r-1}\ \left|\
\begin{array}{l}
\gen{\tau_1,\dots,\tau_r} \text{ is a dense, rank $r$ free subgroup}\\
\text{and }\gen{w_i\tuple{\tau}{r}} \text{ is non-discrete}
\end{array}
\right. \right\}
\] 
is residual and so $\bigcap_{i\in\N} W_i$ is residual and in particular, not empty.
\end{proof}

\section{Eventually Faithfull Sequences}\label{efs}

\begin{definition}
 Let $G$ and $H$ be groups. A sequence $\{f_n\}_{n=1}^{\infty}$ of homomorphisms from $G$ to $H$ is \emph{eventually faithfull} if for every $g\in G$ there exists $n_0\in\N$ such that $g\notin \text{ker}(f_n)$ for all $n\geq n_0$. 
\end{definition}
In order to prove the main theorem, we will need to produce eventually faithfull sequences of homomorphisms from surface groups to free groups. The following constructions apear in \cite{BGSS06} and \cite{BG09}. Let $\Gamma = \Gamma_{2r}$ be the surface group of genus $2r$ ($r\geq 1$). We have the following presentation for $\Gamma$
\[ \Gamma = \langle a_1,a'_1\dots,a_r,a'_r,b_1,b'_1,\dots,b_r,b'_r\ |\ [a_1,a'_1]\cdots [a_r,a'_r][b'_r,b_r]\cdots [b'_1,b_1]\rangle. \]
Let $x = [a_1,a'_1]\cdots [a_r,a'_r]$ and let $h : \Gamma\to\Gamma$ be the Dehn twist around $x$, i.e. 
\begin{align*}
&h(a_i)=a_i\qquad  h(b_i)=x b_i x^{-1}\\
&h(a'_i)=a'_i\qquad h(b'_i)=x b'_i x^{-1}
\end{align*}  
Let $F$ be the free group on $2r$ free generators $\{\phi_1,\phi'_1,\dots,\phi_r,\phi'_r\}$ and let $k : \Gamma\to F$ be the homomorphism defined by
\begin{align*}
& k(a_i) = k(b_i) = \phi_i\\
& k(a'_i) = k(b'_i) = \phi'_i.
\end{align*} 
Consider the map that folds the genus $2r$ surface that has $\Gamma$ as its fundamental group across the curve corresponding to $x$ (this curve seperates the surface into two equal parts). The image of this map is a surface of genus $r$ with one boundry component so it has $F$ as its fundamental group. $k$ is the homomorphism induced on the fundamental groups by this folding map (see figure 1). Denote $f_n = k\circ h^n$.
\begin{figure}[!hb]
\centering
\includegraphics[angle=90,scale=0.4]{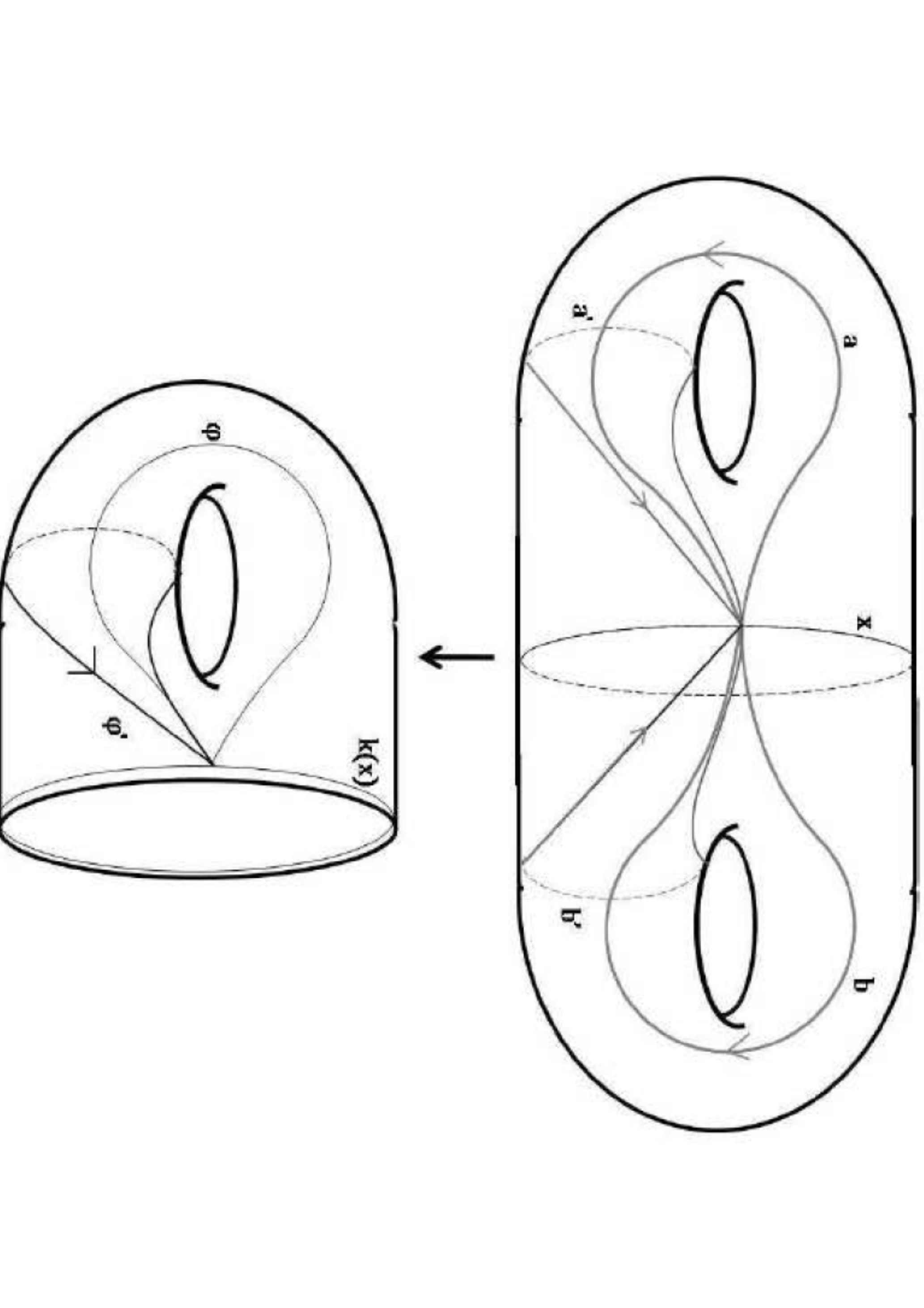}
\caption{}
\end{figure}
\begin{lem}[Breuillard,Gelander,Souto,Storm \cite{BGSS06}]\label{ef-even} 
The sequence $\{ f_n\}_{n=1}^{\infty}$ is eventually faithfull.
\end{lem}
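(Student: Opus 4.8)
The plan is to exploit the fact that the separating curve $x$ exhibits $\Gamma$ as an amalgamated free product and to turn the Dehn twist into a conjugation that can be made arbitrarily ``long''. Cutting $\Sigma_{2r}$ along $x$ splits it into two genus-$r$ subsurfaces with one boundary component each, so $\Gamma = A *_C B$ where $A = \gen{a_1,a'_1,\dots,a_r,a'_r}$ and $B = \gen{b_1,b'_1,\dots,b_r,b'_r}$ are free of rank $2r$ and $C = \gen{x}\cong\Z$ is the cyclic subgroup carried by the boundary. I would first record how $h$ and $k$ act on the two factors. Since $h$ fixes every $a_i,a'_i$ it fixes $x$ and acts as the identity on $A$, while on $B$ it is conjugation by $x$; hence $h^n$ is the identity on $A$ and conjugation by $x^n$ on $B$. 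The folding homomorphism $k$ restricts to an isomorphism on each factor (it carries the free basis of $A$, and also that of $B$, bijectively onto the free basis of $F$), and both restrictions send $x$ to the same element $c := k(x) = [\phi_1,\phi'_1]\cdots[\phi_r,\phi'_r]$. Writing $f_n = k\circ h^n$ this yields the two formulas $f_n|_A = k|_A$ and $f_n(b) = c^{\,n}\,k(b)\,c^{-n}$ for $b\in B$. Equivalently, $k$ identifies $\Gamma$ with the amalgam $F *_{\gen{c}} F$, and under this identification $f_n$ is the map folding the two copies of $F$ together after twisting the second by $c^n$.

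Next I would fix a nontrivial $g\in\Gamma$ and put it in reduced normal form $g = g_1 g_2\cdots g_m$, with the $g_i$ alternating between $A\minus C$ and $B\minus C$. The case $m=1$ is immediate: if $g\in A$ then $f_n(g) = k(g)\neq 1$ for every $n$ because $k|_A$ is injective; if $g\in B$ then $f_n(g)$ is a conjugate of the nontrivial element $k(g)$; and the subcase $g\in C$ is covered since $f_n(x^t) = c^{\,t}\neq 1$ as $c$ has infinite order. For $m\geq 2$ I apply the two formulas termwise to obtain
\[
 f_n(g) \;=\; u_1\, c^{\pm n}\, u_2\, c^{\mp n}\, u_3\, c^{\pm n}\cdots,
\]
an alternating product in $F$ in which each $u_i := k(g_i)$ lies in $F\minus\gen{c}$ (because $g_i\notin C$ and $k$ is injective on each factor), and in which the successive powers of $c$ have opposite signs, every adjacent pair being separated by some $u_i\notin\gen{c}$.

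The remaining, and genuinely technical, step is a cancellation estimate in the free group $F$ showing that this word cannot reduce to the identity once $n$ is large. Here I would use that $c$ is cyclically reduced and \emph{not} a proper power — this holds because a product of commutators is not a proper power in a free group, equivalently because the genus-$r$ surface group $\langle a_i,a'_i \mid \prod_i [a_i,a'_i]\rangle$ is torsion-free — so that $\gen{c}$ is malnormal in $F$. Malnormality is exactly what forbids two powers $c^{+n}$ and $c^{-n}$ of opposite sign from telescoping through an intervening $u_i\notin\gen{c}$: the overlap of $u_i$ with either flanking block is bounded by $|u_i|$, independently of $n$. Consequently a central subword of length $|c|\cdot n - O_g(1)$ of some $c$-block survives in the fully reduced form of $f_n(g)$, so its length tends to infinity and in particular $f_n(g)\neq 1$ for all $n$ larger than a bound depending only on the finitely many lengths $|u_1|,\dots,|u_m|$. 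Since that bound depends only on $g$, this is precisely the assertion that $\{f_n\}$ is eventually faithfull.

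I expect the main obstacle to be this final cancellation bookkeeping. The conceptual picture — one surviving $c$-core per block — is clear, but converting it into a clean uniform lower bound for the reduced length of a long alternating product requires the malnormality (no-telescoping) input and a careful accounting of the bounded cancellation at each of the finitely many junctions; everything before it is essentially the structural identification of $\Gamma$ as $A*_C B$ together with the termwise description of $f_n$.
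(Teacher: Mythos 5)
This lemma is not proved in the paper at all: it is imported verbatim from \cite{BGSS06}, so there is no internal proof to compare against, and your proposal should be judged as a reconstruction of the cited source's argument --- which it essentially is, and its outline is correct. The structural part is all accurate: cutting $\Sigma_{2r}$ along $x$ gives $\Gamma = A *_C B$ with $C = \gen{x}$; the twist $h^n$ is trivial on $A$ and conjugation by $x^n$ on $B$; the folding map $k$ restricts to isomorphisms on both factors carrying $C$ onto $\gen{c}$ with $c = [\phi_1,\phi'_1]\cdots[\phi_r,\phi'_r]$; and applying $f_n = k\circ h^n$ to a reduced normal form $g = g_1\cdots g_m$ (all $g_i \notin C$ when $m\geq 2$) produces the alternating product $u_1 c^{\pm n} u_2 c^{\mp n}\cdots$ with every $u_i = k(g_i) \notin \gen{c}$, your length-one cases being handled correctly as well.

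Two remarks on the final step, which is indeed the crux. First, the cancellation estimate you postpone is a classical, citable fact, usually called Baumslag's lemma (G.~Baumslag, \emph{On generalised free products}, 1962): if $u_1,\dots,u_m$ each fail to commute with $c\neq 1$ in a free group, then $u_1 c^{n_1} u_2 c^{n_2}\cdots u_m c^{n_m} \neq 1$ once all $|n_i|$ are sufficiently large. Note that no alternation of signs is required, so your emphasis on the $\pm n$ pattern is harmless but superfluous; what matters is only that each $u_i$ lies outside the centralizer of $c$, and that centralizer equals $\gen{c}$ precisely because $c$ is not a proper power --- so your malnormality input is the right one, and the bounded-cancellation bookkeeping you sketch does go through. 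Second, your parenthetical claim that ``a product of commutators is not a proper power in a free group'' is false as a general statement: by a well-known example of Culler, $[a,b]^3$ is a product of two commutators in the free group on $a,b$. The specific fact you need --- that the surface relator $[\phi_1,\phi'_1]\cdots[\phi_r,\phi'_r]$, a product of commutators of \emph{distinct basis elements}, is not a proper power --- is true, and your second justification is the correct one: were it equal to $w^k$ with $k\geq 2$, the Karrass--Magnus--Solitar torsion theorem for one-relator groups would force torsion in the genus-$r$ surface group, which is torsion-free. With that aside corrected and Baumslag's lemma either invoked or proved along the lines you indicate, your proposal is a complete and correct proof of the lemma, in the same spirit as the argument in \cite{BGSS06}.
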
  

We now construct an eventually faithfull sequence of homomorphisms from an odd genus surface group into a free group. Let $\Gamma = \Gamma_{2r+1}$ be a surface group of genus $2r+1$ ($r\geq 1$) with the presentation
\[ \Gamma = \gen{a_1,a'_1,\dots,a_r,a'_r,b,b',c_1,c'_1,\dots,c_r,c'_r\ |\ [a_1,a'_1]\cdots [a_r,a'_r][b',b][c'_1,c_1]\cdots [c'_r,c_r]}\]
Denote $x=[a_1,a'_1]\cdots [a_r,a'_r]b'$ and let $F$ be the free group on $2r+1$ free generators $\{\phi_1,\phi'_1,\dots,\phi_r,\phi'_r,\tau\}$. Let $\delta:\Gamma\to\Gamma$ and $\zeta:\Gamma\to\Gamma$ be denh twists around $x$ and $b'$ respectively, that is
\begin{align*}
 \delta(a_i) &= a_i  & \zeta(a_i) &= a_i\\
 \delta(a'_i) &= a'_i & \zeta(a'_i) &= a'_i\\
 \delta(b) &= xb & \zeta(b) &= b(b')^{-1}\\
 \delta(b') &= b' & \zeta(b') &= b'\\
 \delta(c_i) &= xc_ix^{-1} & \zeta(c_i) &= c_i\\
 \delta(c'_i) &= xc'_ix^{-1} & \zeta(c'_i) &= c'_i\\
\end{align*}
\begin{figure}[!ht]
\centering
\includegraphics[angle=90,scale=0.4]{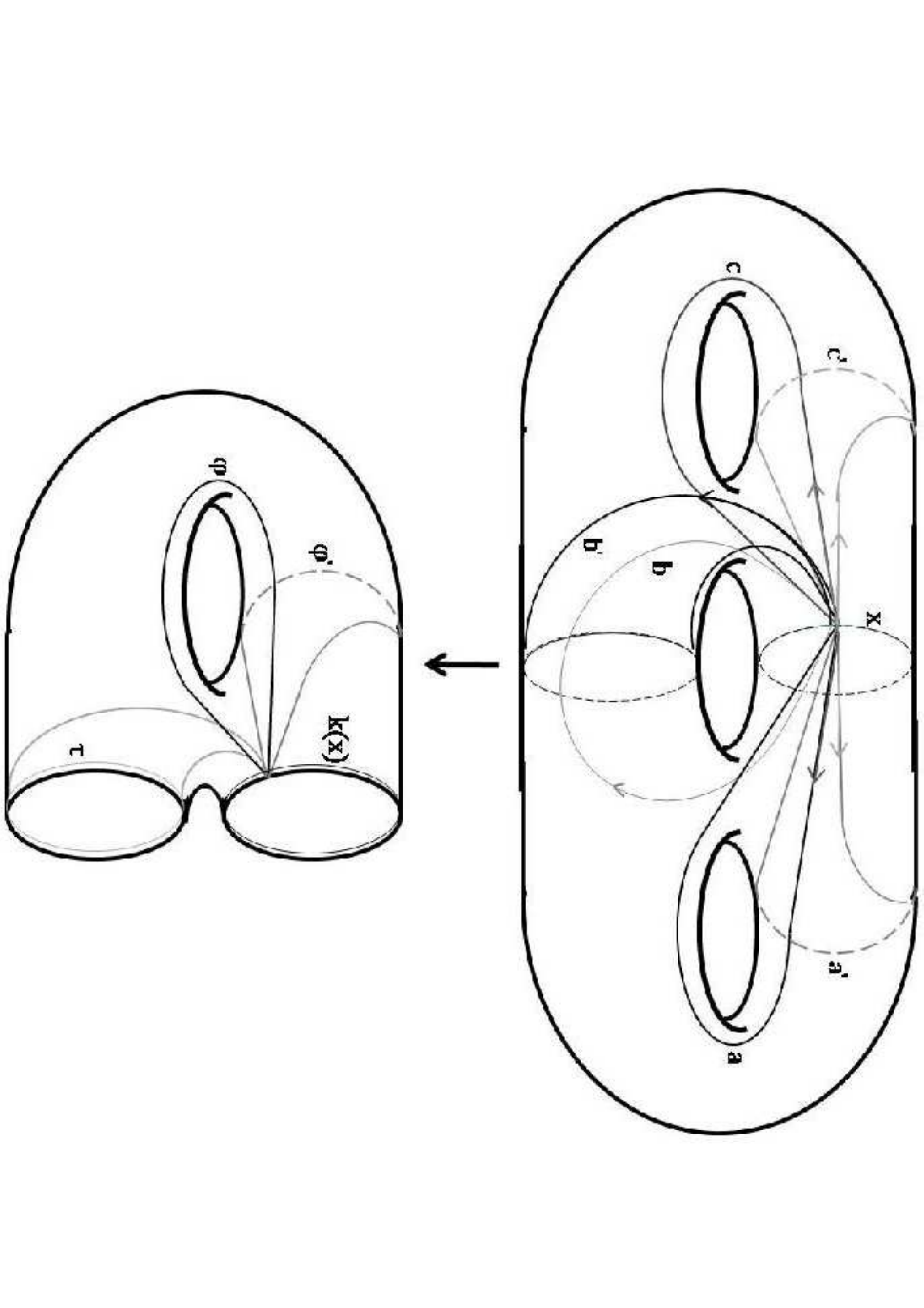}
\caption{}
\end{figure}
Notice that $\delta$ and $\zeta$ commute.

Let $k : \Gamma\to F$ be the map induced by folding the $2r+1$ surface across the curves $x$ and $b'$ (these curves seperate the surface into two surfaces of genus $r$ and two boundry components). Explicitely,
\begin{align*}
& k(a_i) = \phi_i\\
& k(a'_i) = \phi'_i\\
& k(b) = 1\\
& k(b'_i) = \tau\\
& k(c_i) = \phi_i\\
& k(c'_i) = \phi'_i
\end{align*}
(see figure 2). Finally, denote $\rho_n = k\circ (\delta\circ\zeta)^n$.
\begin{lem}[Barlev,Gelander \cite{BG09}]\label{ef-odd} 
The sequence $\{\rho_n\}_{n=1}^{\infty}$ is eventually faithfull.
\end{lem}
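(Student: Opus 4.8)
The plan is to mimic the strategy used for the even-genus case (Lemma \ref{ef-even}), adapting it to the combined twist $\delta\circ\zeta$. The key idea is that an eventually faithful sequence is produced when the twist maps ``spread out'' any fixed nontrivial element $g\in\Gamma$ so rapidly that, after folding by $k$, its image cannot cancel to the identity for all large $n$. Concretely, I would fix an arbitrary nontrivial $g\in\Gamma$ and show that there is $n_0$ such that $\rho_n(g)=k((\delta\circ\zeta)^n(g))\neq 1$ in $F$ for every $n\geq n_0$. Since $\delta$ and $\zeta$ commute (as the excerpt notes) and are automorphisms of $\Gamma$, the composite $(\delta\circ\zeta)^n$ is again an automorphism, so it suffices to understand how the generators are transported under high powers of $\delta\circ\zeta$ and then to read off the reduced form of the image word under $k$.

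The central computation is to track the action of $(\delta\circ\zeta)^n$ on the generators. From the defining formulas, $\delta$ conjugates the $c_i,c_i'$ by $x$ and pushes $b$ to $xb$, while $\zeta$ modifies only $b$ by a factor of $(b')^{-1}$; both fix the $a_i,a_i'$ and $b'$. I would first derive closed (or inductive) expressions for $(\delta\circ\zeta)^n(b)$ and $(\delta\circ\zeta)^n(c_i)$, $(\delta\circ\zeta)^n(c_i')$, showing that each acquires a prefix that is a high power of $x$ (together with controlled $b'$-contributions from $\zeta$). The point is that $k(x)=k([a_1,a_1']\cdots[a_r,a_r']b')=[\phi_1,\phi_1']\cdots[\phi_r,\phi_r']\,\tau$ is a nontrivial, \emph{non-torsion} element of $F$ of positive word length, so $k(x^n)$ has length growing linearly in $n$. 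Writing a general $g$ as a reduced word in the generators and applying $(\delta\circ\zeta)^n$ followed by $k$, the image is a product of pieces each carrying a power of $k(x)$ whose exponent sum is governed by the exponents of $b,c_i,c_i'$ appearing in $g$; a cancellation-of-free-reductions argument then shows the total reduced length in $F$ eventually grows without bound (or at least stays positive), so $\rho_n(g)\neq 1$ for all large $n$.

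The main obstacle, as in \cite{BGSS06}, is controlling free cancellation: a priori the $x$-powers introduced on different generators could telescope and annihilate each other under $k$, so one must verify that the inserted high powers of $k(x)$ genuinely survive reduction. The cleanest way to handle this is to exploit the structure of $k(x)=[\phi_1,\phi_1']\cdots[\phi_r,\phi_r']\tau$ and the fact that $k(b)=1$: because $b$ maps to the identity, the $\zeta$-contributions (which only touch $b$) are invisible after folding, and the surviving effect of $(\delta\circ\zeta)^n$ under $k$ is essentially conjugation of the $c$-images by $k(x)^n$. I would therefore reduce the whole problem to showing that for a fixed reduced word, conjugating its $c_i$-syllables by ever-higher powers of the fixed element $k(x)\neq 1$ cannot return the identity for all $n$ — a statement that follows from the fact that in a free group $w k(x)^n w' \to$ has eventually unbounded length unless the word is already degenerate, in which case the non-conjugate-to-a-power hypothesis built into the construction rules out vanishing. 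Finally I would separate the analysis into the case where $g$ involves some $c_i,c_i'$ or $b$ (handled by the growing $k(x)$-powers) and the case where $g$ lies in the subgroup generated by the $a_i,a_i',b'$ (on which $\delta\circ\zeta$ acts trivially and $k$ is injective), completing the proof that the sequence is eventually faithful.
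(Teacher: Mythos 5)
Your reduction step contains a fatal error, and it sits exactly where the real content of the lemma lies. You argue that since $k(b)=1$, the $\zeta$-contributions are ``invisible after folding,'' so that the surviving effect of $(\delta\circ\zeta)^n$ under $k$ is essentially conjugation (or prefixing) of the remaining images by $k(x)^n$. But $\zeta$ acts by $\zeta(b)=b(b')^{-1}$, and $k(b')=\tau\neq 1$; since $\delta$ and $\zeta$ both fix $x$ and $b'$, one computes $(\delta\circ\zeta)^n(b)=x^n\,b\,(b')^{-n}$, hence $\rho_n(b)=k(x)^n\tau^{-n}$, not $k(x)^n$. The $\zeta$-part survives folding through $b'$, not through $b$. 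Discarding it, as you propose, reduces the problem to the sequence $k\circ\delta^n$, and that sequence is \emph{not} eventually faithful: take $g=[b,x]$, which is nontrivial in $\Gamma$ (if $b$ and $x$ commuted they would be powers of a common element $z$, and then $k(b)=k(z)^p=1$ would force $k(z)=1$ and so $k(x)=1$, contradicting $k(x)=[\phi_1,\phi'_1]\cdots[\phi_r,\phi'_r]\tau\neq 1$). Since $\delta^n(b)=x^n b$ and $\delta^n(x)=x$, one gets $k(\delta^n([b,x]))=k(x)^n\,k(x)\,k(x)^{-n}\,k(x)^{-1}=1$ for \emph{every} $n$. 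So the mechanism you keep cannot prove the lemma, while the mechanism you discard is precisely what rescues such elements: retaining the $\zeta$-factors gives $\rho_n([b,x])=k(x)^n\,\tau^{-n}k(x)\tau^{n}\,k(x)^{-n-1}$, which is nontrivial because $\tau$ and $k(x)$ do not commute in $F$.

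Beyond this, the cancellation control you defer (``a cancellation-of-free-reductions argument then shows\dots'') is the entire proof and cannot be run generator-by-generator: one needs normal forms adapted to the splitting of $\Gamma$ along \emph{both} curves $x$ and $b'$ simultaneously, with both the $k(x)^n$-blocks and the $\tau^{n}$-blocks growing — this is the actual argument of Barlev--Gelander, and the paper itself gives no proof, citing \cite{BG09}. Two smaller points: the ``non-conjugate-to-a-power hypothesis'' you invoke belongs to Theorem \ref{free-dense}, a statement about permutations in $\S$, and plays no role in this purely algebraic lemma about homomorphisms $\Gamma\to F$; and your easy case is correct but routine — $a_1,a'_1,\dots,a_r,a'_r,b'$ do generate a free subgroup (Freiheitssatz) on which $\delta\circ\zeta$ is the identity and $k$ restricts to an isomorphism onto $F$ — it is the hard case that your sketch, as written, gets wrong.
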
   

\section{Proof of The Main Theorem}

We will need the following results.
\begin{lem}\label{nondisc2}
	Let $\phi,\psi\in\S$ be such that both $\gen{\phi}$ and $\gen{\psi}$ are non-discrete. Then
	$\gen{(\phi,\psi)}\leq\S^2$ is non-discrete.
\end{lem}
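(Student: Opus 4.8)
The plan is to reduce the claim to Lemma \ref{nondisc}, which characterizes non-discreteness of a cyclic subgroup of $\S$ in terms of its orbit structure. By that lemma, $\gen{\phi}$ non-discrete means all orbits of $\gen{\phi}$ on $\Z$ are finite with unbounded lengths, and similarly for $\gen{\psi}$. We want to show $\gen{(\phi,\psi)}\leq\S^2$ acts on $\Z\sqcup\Z$ (or equivalently that the powers $(\phi^k,\psi^k)$ come arbitrarily close to the identity) so that its orbits are finite and of unbounded length; then Lemma \ref{nondisc} applied in $\S^2$ (whose topology and orbit characterization are identical, since $\S^2=\sym{\Z\sqcup\Z}$ restricted to block-preserving permutations) finishes the proof.

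First I would observe that the orbit of a point $(a, \text{first copy})$ under $\gen{(\phi,\psi)}$ is just the $\gen{\phi}$-orbit of $a$, and similarly on the second copy; hence every orbit of $\gen{(\phi,\psi)}$ is finite, being an orbit of either $\gen{\phi}$ or $\gen{\psi}$. So the ``finite orbits'' half of the hypothesis of Lemma \ref{nondisc} is immediate. The content is the ``unbounded length'' half: I must show that the orbit lengths of $\gen{(\phi,\psi)}$ are unbounded. Since these orbit lengths are exactly the orbit lengths of $\gen{\phi}$ together with those of $\gen{\psi}$, and $\gen{\phi}$ already has unbounded orbit lengths by hypothesis, the union certainly has unbounded lengths. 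This already gives the result via Lemma \ref{nondisc}.

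Alternatively, and perhaps more transparently matching the metric picture, I would argue directly that every basic neighborhood of the identity in $\S^2$ contains a nontrivial power of $(\phi,\psi)$. A basic neighborhood is determined by a finite set $[n]$ in each of the two copies of $\Z$; let $\Delta_\phi$ be a finite union of $\gen{\phi}$-orbits covering $[n]$ and $\Delta_\psi$ likewise for $\gen{\psi}$, and set $m$ to be the least common multiple (or product) of all the orbit lengths appearing in $\Delta_\phi\cup\Delta_\psi$. Then $(\phi^m,\psi^m)$ fixes $[n]$ pointwise in both coordinates. Because $\gen{\phi}$ has an orbit longer than $m$ (unbounded lengths), $\phi^m\neq 1$, so $(\phi^m,\psi^m)\neq(1,1)$; thus a nontrivial power lands in the chosen neighborhood, proving non-discreteness.

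I do not anticipate a serious obstacle here; the lemma is essentially a direct consequence of Lemma \ref{nondisc} once one notes that the orbit structure of $\gen{(\phi,\psi)}$ is the disjoint union of the orbit structures of $\gen{\phi}$ and $\gen{\psi}$. The only point requiring a little care is to confirm that the single-generator characterization of Lemma \ref{nondisc} applies verbatim to the cyclic group $\gen{(\phi,\psi)}$ acting on the countable set $\Z\sqcup\Z$, which it does since $\S^2$ is topologically a full symmetric group on a countable set with the product (pointwise-convergence) topology. I would remark that the same argument shows, more generally, that a finite product $\gen{(\phi_1,\dots,\phi_s)}$ is non-discrete as soon as even one factor $\gen{\phi_i}$ is non-discrete, which is the form that will actually be needed when applying the eventually-faithful sequences of Section \ref{efs}.
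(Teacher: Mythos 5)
Your proof of the lemma as stated is correct, and your second argument is essentially identical to the paper's own proof: cover $[m]$ in each coordinate by a finite union of orbits, let $n$ be the product (or lcm) of the orbit lengths, and observe that $(\phi^n,\psi^n)$ fixes the defining finite set pointwise while $\phi^n\neq 1$ because some $\gen{\phi}$-orbit is longer than $n$. Your first argument, viewing $(\phi,\psi)$ as a single permutation of $\Z\sqcup\Z$ so that Lemma \ref{nondisc} applies directly (the orbits being exactly the $\gen{\phi}$-orbits on one copy and the $\gen{\psi}$-orbits on the other), is a valid and tidy repackaging of the same orbit bookkeeping.

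However, your closing remark is a genuine error: it is \emph{not} true that $\gen{(\phi_1,\dots,\phi_s)}$ is non-discrete as soon as a single factor $\gen{\phi_i}$ is non-discrete, and this contradicts the structure of your own reduction, which needs \emph{all} orbits on the disjoint union to be finite — i.e., every factor must have only finite orbits. Concretely, take $\psi=\sigma$ the shift: then $\gen{(\phi,\sigma)}\cap\big(\S\times U(\mathrm{id},\{0\})\big)=\{(1,1)\}$, since $0^{\sigma^n}=0$ forces $n=0$, so $\gen{(\phi,\sigma)}$ is discrete no matter how non-discrete $\gen{\phi}$ is. Accordingly, the one-factor version is also not ``the form that will actually be needed'': in the odd-genus application the paper invokes Theorem \ref{free-dense} precisely to guarantee that \emph{both} $\gen{\tau}$ and $\gen{\gamma}$ are non-discrete before applying this lemma.
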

\begin{proof}
	We prove that every basic neighbourhood of $(1,1)$ contains a non-trivial element of
	$\gen{(\phi,\psi)}$. Let $m\in\N$. Let $\Delta = \bigcup_{i=1}^{\ell} \Delta_i$ and
	$\Gamma=\bigcup_{i=1}^k\Gamma_i$ be finite unions of orbits of $\gen{\phi}$ and 
	$\gen{\psi}$ respectively such that $[m]\subset\Delta$ and $[m]\subset\Gamma$. From
	proposition \ref{nondisc} we have that all the orbits of $\gen{\phi}$ and $\gen{\psi}$
	are finite and so we can define the number 
	\[
		n = \prod_{i=1}^{\ell} |\Delta_i|\cdot\prod_{i=1}^k |\Gamma_i|.
	\]
	Notice that every element of $\Delta$ is fixed by $\phi^n$ and every element of 
	$\Gamma$ is fixed by $\psi^n$ and in particular every $i\in [m]$ is fixed by $\phi^n$ 
	and $\psi^n$. From proposition \ref{nondisc} we also have that the lengths of the
	orbits of $\gen{\phi}$ and $\gen{\psi}$ are unbounded and in particular $\gen{\phi}$ 
	and $\gen{\psi}$ both have an orbit of length greater then $n$ and so $\phi^n$ and
	$\psi^n$ are non-trivial. Thus $(\phi,\psi)^n$ is a non-trivial element contained in
	the basic neighbourhood of $(1,1)$ defined by $[m]\times [m]$.
\end{proof}
\begin{lem}\label{lem1}
	Let G be a Hausdorff topological group and let $\gamma\in G$ such that $\gen{\gamma}$ is non-discrete. Then, for every $n_0\in\N$ the set
	$\{\gamma^n\ |\ n\geq n_0\}$ is dense in $\overline{\gen{\gamma}}$.
\end{lem}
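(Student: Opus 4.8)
The plan is to reduce the statement to a single recurrence fact about the identity and then spread it around by translation. First I would record that $\gamma$ has infinite order: a finite subgroup of a Hausdorff topological group is discrete, so if $\gen{\gamma}$ were finite it would be discrete, contrary to hypothesis. Consequently the powers $\gamma^m$ ($m\in\Z$) are pairwise distinct. The heart of the argument is the claim
\[
	(\ast)\qquad 1\in\overline{\{\gamma^m\ |\ m\geq N\}}\quad\text{for every }N\in\N,
\]
i.e. every neighbourhood of $1$ contains $\gamma^m$ for arbitrarily large $m$.

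Granting $(\ast)$, here is how I would finish. Fix $g\in\overline{\gen{\gamma}}$ and an open neighbourhood $V$ of $g$; I must produce $n\geq n_0$ with $\gamma^n\in V$. Since $\gen{\gamma}$ is dense in its closure and $V$ is open, $V$ contains some power $\gamma^k$ with $k\in\Z$. Left translation by $\gamma^{-k}$ is a homeomorphism, so $W=\gamma^{-k}V$ is an open neighbourhood of $1$. Applying $(\ast)$ with $N=\max(1,n_0-k)$ gives $m\geq N$ with $\gamma^m\in W$, whence $\gamma^{k+m}=\gamma^k\gamma^m\in\gamma^k W=V$ and $k+m\geq n_0$. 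Thus $\gamma^{k+m}$ witnesses $V\cap\{\gamma^n\ |\ n\geq n_0\}\neq\emptyset$, proving density.

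It remains to establish $(\ast)$, which I expect to be the main obstacle: non-discreteness only supplies \emph{some} nontrivial power in each neighbourhood of $1$, and one must upgrade this to powers with \emph{arbitrarily large} exponent. I would argue as follows. Since $\gen{\gamma}$ is non-discrete, $1$ is not isolated in $\gen{\gamma}$, so every neighbourhood of $1$ meets $\gen{\gamma}\minus\{1\}$; using that $G$ is Hausdorff one sees that in fact every neighbourhood of $1$ contains infinitely many powers of $\gamma$ (if only finitely many nonidentity powers lay in a neighbourhood $U$, one could shrink $U$ to separate $1$ from them and thereby isolate $1$). Now, given a neighbourhood $W$ of $1$ and $N\in\N$, choose a symmetric neighbourhood $W_0$ of $1$ with $W_0 W_0\subseteq W$ (continuity of multiplication and inversion). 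The exponent set $S=\{m\ |\ \gamma^m\in W_0\}$ is infinite, hence unbounded, so we may pick $m_1<m_2$ in $S$ with $m_2-m_1\geq N$. Then
\[
	\gamma^{m_2-m_1}=(\gamma^{m_1})^{-1}\gamma^{m_2}\in W_0^{-1}W_0=W_0 W_0\subseteq W,
\]
which, together with $m_2-m_1\geq N$, is exactly $(\ast)$. The only delicate points are the Hausdorff upgrade to infinitely many powers and the choice of a symmetric $W_0$; both are standard, and the remainder is bookkeeping.
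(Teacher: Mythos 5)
Your proof is correct, and while it shares the general toolkit of the paper's argument (translation to a neighbourhood of the identity, symmetric neighbourhoods, Hausdorffness to discard finitely many powers), the key mechanism is genuinely different. The paper works in a single pass: given open $U\subseteq\overline{\gen{\gamma}}$ containing $\gamma^m$ with $m<n_0$, it translates to $U'=U\gamma^{-m}$, symmetrizes to $U''=U'\cap(U')^{-1}$, removes the finitely many powers $\gamma^k$ with $|k|<n_0-m$, and uses density to find a single power $\gamma^n$ with $|n|\geq n_0-m$; the symmetry of $U''$ then places both $\gamma^{n+m}$ and $\gamma^{-n+m}$ in $U$, and it hedges on the sign since one of the two exponents must be $\geq n_0$. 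You instead factor out the identity-recurrence claim $(\ast)$ and prove it by a difference trick: two powers $\gamma^{m_1},\gamma^{m_2}$ in a symmetric $W_0$ with $W_0W_0\subseteq W$ and $m_2-m_1\geq N$ (pigeonhole on the infinite exponent set) yield $\gamma^{m_2-m_1}\in W$ with a guaranteed large \emph{positive} exponent, so no case analysis on signs is needed afterwards. Your route uses continuity of multiplication (to get $W_0$ with $W_0W_0\subseteq W$), which the paper's proof never invokes — it needs only inversion and translations; in exchange, your decomposition is more modular, and $(\ast)$ is a cleaner reusable statement than the paper's inlined argument. One small point in your favour: both proofs tacitly need $\gamma$ to have infinite order when discarding the finitely many powers $\gamma^k$ (so that distinct exponents give distinct group elements); you establish this explicitly at the outset via the observation that a finite subgroup of a Hausdorff group is discrete, whereas the paper leaves it implicit. (Like the paper, you also pass silently from non-discreteness of $\gen{\gamma}$ to non-isolation of $1$, which requires the standard homogeneity argument, but this is harmless in both cases.)
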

\begin{proof}
	First we notice that since $\gen{\gamma}$ is non discrete then also $\overline{\gen{\gamma}}$ is non discrete. Let $U\subset 
	\overline{\gen{\gamma}}$ be open then we have $\gamma^m\in U$ for some $m\in\Z$. If $m\geq n_0$ we are done so assume $m<n_0$. Now, If we denote
	\begin{align*}
		U'&=U\gamma^{-m}\\
		U''&=U'\cap (U')^{-1}
	\end{align*}
	Then $U''$ is an open symmetric identity neighborhood and since $\overline{\gen{\gamma}}$ is non discrete, $U''$ is not finite. Now take
	\[
		\widetilde{U} = U''\smallsetminus\{\gamma^k\ : \ |k|< n_0-m\}.
	\] 
We have that $\widetilde{U}$ is open (since $G$ is Hausdorff) and non empty, thus there exits $n\in\Z$ such that $\gamma^n\in\widetilde{U}$. By the definition of $\widetilde{U}$ we have that $|n|\geq n_0-m$. Also, since $\widetilde{U}\subset U''$ and $U''$ is symmetric it follows that 					$\gamma^n,\gamma^{-n}\in U''\subset U'$. Hence, $\gamma^{n+m},\gamma^{-n+m}\in U$ and since either $n+m\geq n_0$ or $-n+m\geq n_0$ we are done. 
\end{proof}

\subsection{Proof of theorem \ref{main} for even genus}

By theorem \ref{free-dense} there exists a subgroup $F\leq\S$ such that $F$ is dense, free with $2r$ free generators $\phi_1,\phi'_1,\dots,\phi_r,\phi'_r\in\S$ and such that $\gen{[\phi_1,\phi'_1]\cdots [\phi_r,\phi'_r]}$ is non-discrete. Denote $\gamma = [\phi_1,\phi'_1]\cdots [\phi_r,\phi'_r]$ and $\Omega = \overline{\gen{\gamma}}$. Let $\Gamma = \Gamma_{2r}$ be a surface group of genus $2r$ ($r\geq 1$) with the presentation
\[ \Gamma = \langle a_1,a'_1\dots,a_r,a'_r,b_1,b'_1,\dots,b_r,b'_r\ |\ [a_1,a'_1]\cdots [a_r,a'_r][b'_r,b_r]\cdots [b'_1,b_1]\rangle. \]
Define for every $\omega\in\Omega$ a homomorphism $f_{\omega} : \Gamma\to\S$ by 
\begin{align*}
f_{\omega}(a_i) &= \phi_i\\
f_{\omega}(a'_i) &= \phi'_i\\
f_{\omega}(b_i) &= \omega\phi_i\omega^{-1}\\
f_{\omega}(b'_i) &= \omega\phi'_i\omega^{-1}.
\end{align*} 
Since every $\omega\in\Omega$ commutes with $\gamma$ this defines a homomorphism. Indeed, for every $\omega\in\Omega$: 
\begin{align*}
&f_{\omega}([a_1,a'_1]\cdots [a_r,a'_r][b'_r,b_r]\cdots [b'_1,b_1]) = \underbrace{[\phi_1,\phi'_1]\cdots [\phi_r,\phi'_r]}_{\gamma} \omega \underbrace{[\phi'_r,\phi_r] \cdots [\phi'_1,\phi_1]}_{\gamma^{-1}} \omega^{-1}=\\
&\gamma\omega\gamma^{-1}\omega^{-1} = 1.
\end{align*}
$\Omega$ is a completely metrizable space and every element of $\Omega$ corresponds to a homomorphism $\Gamma\to\S$ whose image contains $F$, hence the image is dense. We are left to show that at least one of those homorphisms is also faithfull. In fact, we show that $\chi = \{\omega\in\Omega\ |\ \text{$f_{\omega}$ is faithfull}\}$ is residual in $\Omega$. 

For every $g\in\Gamma\minus\{1\}$ denote $\chi_g = \{\omega\in\Omega\ |\ f_{\omega}(g) \neq 1\}$. Notice that 
\[\chi = \bigcap_{g\in\Gamma\minus\{1\}} \chi_g.\]
From lemma \ref{ef-even}, the sequence $\{f_{\gamma^n}\}_{n\in\N}$ is eventually faithfull and so for every $g\in\Gamma\minus\{1\}$ there exists $n_0\in\N$ such that $\gamma^n\in\chi_g$ for all $n\geq n_0$. By lemma \ref{lem1} we have that $\{\gamma^n\ |\ n\geq n_0\}$ is dense in $\Omega$ and so, $\chi_g$ is dense in $\Omega$. $\chi_g$ is also open as the inverse image of the the open set $\S\minus\{1\}$ under the continuous map $\omega\to f_{\omega}(g)$. This shows that $\chi$ is residual in $\Omega$. \qed    

\subsection{Proof of theorem \ref{main} for odd genus}

Let $\Gamma = \Gamma_{2r+1}$ be a surface group of genus $2r+1$ ($r\geq 1)$ with the presentation 
\[ \Gamma = \gen{a_1,a'_1,\dots,a_r,a'_r,b,b',c_1,c'_1,\dots,c_r,c'_r\ |\ [a_1,a'_1]\cdots [a_r,a'_r][b',b][c'_1,c_1]\cdots [c'_r,c_r]}\]
Let $F\leq\S$ be a dense, free subgroup with $2r+1$ free generators $\phi_1,\phi'_1,\dots,\phi_r,\phi'_r,\tau\in\S$ such that $\gen{\tau}$ and $\gen{[\phi_1,\phi'_1]\cdots [\phi_r,\phi'_r]\tau}$ are non-discrete (the existence of such a free subgroup is assured by theorem \ref{free-dense}).
Denote $\gamma = [\phi_1,\phi'_1]\cdots [\phi_r,\phi'_r]\tau$ and $\Omega = \overline{\gen{(\gamma,\tau)}}\subset \S^2$. For every $(\psi,\xi)\in\Omega$ we define a homomorphism $f_{(\psi,\xi)} : \Gamma\to\S$ by setting
\begin{align*}
f_{(\psi,\xi)}(a_i) & = \phi_i & f_{(\psi,\xi)}(b) &= \psi\xi^{-1} &f_{(\psi,\xi)}(c_i) &= \psi\phi_i\psi^{-1}\\
f_{(\psi,\xi)}(a'_i) &= \phi'_i& f_{(\psi,\xi)}(b') &= \tau & f_{(\psi,\xi)}(c'_i) &= \psi\phi'_i\psi^{-1}
\end{align*}
Since every $(\psi,\xi)\in\Omega$ commutes with $(\gamma,\tau)$ we have that $f_{(\psi,\xi)}$ is well defined as a homomorphism because
\begin{align*}
&f_{(\psi,\xi)}([a_1,a'_1]\cdots [a_r,a'_r][b',b][c'_1,c_1]\cdots [c'_r,c_r])=\\
&= [\phi_1,\phi'_1]\cdots [\phi_r,\phi'_r][\tau,\psi\xi^{-1}]\psi \underbrace{[\phi'_1,\phi_1]\cdots [\phi'_r,\phi_r]}_{=\tau\gamma^{-1}}\psi^{-1}=\\
&= \gamma\psi\xi^{-1}\tau^{-1}\xi\psi^{-1}\psi\tau\gamma^{-1}\psi^{-1} = \gamma\psi\xi^{-1}\tau^{-1}\xi\tau\gamma^{-1}\psi^{-1}=\\
&= \gamma\psi\xi^{-1}\xi\tau^{-1}\tau\gamma^{-1}\psi^{-1} = \gamma\psi\gamma^{-1}\psi^{-1} = 1.
\end{align*} 
Notice that $f_{(\gamma^n,\tau^n)} = \rho_n$ (as defined is Chapter \ref{efs}) for every $n\in\N$ and so by lemma \ref{ef-odd}, the sequence $f_{(\gamma^n,\tau^n)}$ is eventually faithfull.

The image of every homomorphism $f_{(\gamma^n,\tau^n)}$ contains $F$, hence the image is dense. Finally, we show that $\chi = \{ (\psi,\xi)\in\Omega\ |\ \text{$f_{(\psi,\xi)}$ is faithfull} \}$ is residual in the completely metrizable space $\Omega$ and in particular, $\chi\neq\varnothing$. For every $g\in\Gamma\minus\{1\}$ we denote $\chi_g = \{(\psi,\xi)\in\Omega\ |\ f_{(\psi,\xi)}(g)\neq 1 \}$. As in the previous section, $\chi_g$ is open as the inverse image of an open set under a continuous map. Since $f_{(\gamma^n,\tau^n)}$ is eventually faithfull there exits $n_0\in\N$ such that $f_{(\gamma^n,\tau^n)}\in\chi_g$ for all $n\geq n_0$. Since $\gen{\tau}$ and $\gen{\gamma}$ are non discrete we get from lemma \ref{nondisc2} that $\Omega$ is non-discrete and so by lemma \ref{lem1} we have that $\{(\gamma^n,\tau^n)\ |\ n\geq n_0\}$ is dense in $\Omega$. This shows that $\chi_g$ is dense, hence
\[ \chi = \bigcap_{g\in\Gamma\minus\{1\}} \chi_g \]
is residual. 

\bibliographystyle{amsplain}
{\bibliography{Daniel}}
\end{document}